\newcommand{\tr}[0]{\operatorname{tr}}
\newcommand{\tphi}[0]{\tilde{\varphi}}
\newtheorem{theorem}{Theorem}[section]
\newtheorem{lemma}[theorem]{Lemma}
\title{Convergence of the parabolic complex Monge-Amp\`ere equation on compact Hermitian manifolds}
\author{Matt Gill}
\date{}
\begin{document}
\maketitle

\bigskip

\begin{abstract}
We prove $C^\infty$ convergence for suitably normalized solutions of the parabolic complex Monge-Amp\`ere equation on compact Hermitian manifolds. This provides a parabolic proof of a recent result of Tosatti and Weinkove. 
\end{abstract}

\bigskip

\section{Introduction}

Let $(M,g)$ be a compact Hermitian manifold of complex dimension $n$ and $\omega$ be the real $(1,1)$ form $\omega = \sqrt{-1} \sum_{i,j} g_{i\bar{j}} dz^i \wedge dz^{\bar{j}}.$ Let $F$ be a smooth function on $M$. We consider the parabolic complex Monge-Amp\`{e}re equation 
\begin{equation}\label{eq:ma} 
\frac{\partial \varphi}{\partial t} = \log{\frac{\det{(g_{i\bar{j}}+\partial_i \partial_{\bar{j}} \varphi)}}{\det{g_{i\bar{j}}}}} - F, \ \ g_{i\bar{j}} + \partial_i \partial_{\bar{j}} \varphi > 0
\end{equation}
with initial condition $\varphi(x,0) = 0$.

The study of this type of Monge-Amp\`{e}re equation originated in proving the Calabi conjecture. The proof of the conjecture reduced to assuming that $\omega$ is K\"{a}hler and finding a unique solution to the elliptic Monge-Amp\`{e}re equation 
\begin{equation}\label{eq:elliptic}
\log{\frac{\det{(g_{i\bar{j}}+\partial_i \partial_{\bar{j}} \varphi)}}{\det{g_{i\bar{j}}}}} = F, \ \ g_{i\bar{j}} + \partial_i \partial_{\bar{j}} \varphi > 0.
\end{equation} 
Calabi showed that if a solution to \eqref{eq:elliptic} exists, it is unique up to adding a constant to $\varphi$ \cite{Ca}. Yau used the continuity method to show that if $$\int_M e^F \omega^n = \int_M \omega^n$$ then \eqref{eq:elliptic} admits a smooth solution \cite{Yau}. The proof of Yau required \textit{a priori} $C^\infty$ estimates for $\varphi$.

Cao used Yau's estimates to show that in the K\"{a}hler case, \eqref{eq:ma} has a smooth solution for all time that converges to the unique solution of \eqref{eq:elliptic} \cite{Cao}. 

Since not every complex manifold admits a K\"{a}hler metric, one can naturally study the Monge-Amp\`{e}re equations \eqref{eq:ma} and \eqref{eq:elliptic} on a general Hermitian manifold. Fu and Yau discussed physical motivation for studying non-K\"{a}hler metrics in a recent paper \cite{FY}.

Cherrier studied \eqref{eq:elliptic} in the general Hermitian setting in the eighties, and showed that in complex dimension $2$ or when $\omega$ is balanced (i.e. $d(\omega^{n-1}) = 0$), there exists a unique normalization of $F$ such that \eqref{eq:elliptic} has a unique solution \cite{Chr}. Precisely, Cherrier proved that under the above conditions, given a smooth function $F$ on $M$, there exists a unique real number $b$ and a unique function $\varphi$ solving the Monge-Amp\`{e}re equation 
\begin{equation}\label{eq:elliptic2}
\log \frac{\det (g_{i\bar{j}} + \partial_i \partial_{\bar{j}} \varphi)}{\det g_{i\bar{j}}} = F + b, \ \ g_{i\bar{j}} + \partial_i \partial_{\bar{j}} \varphi > 0
\end{equation}
such that $\int_M \varphi \ \omega^n = 0$.

Recently, Guan and Li proved that \eqref{eq:elliptic} has a solution on a Hermitian manifold with the added condition $$\partial \bar{\partial} \omega^k = 0$$ for $k=1,2$. They applied this result to finding geodesics in the space of Hermitian metrics. Related work can be found in \cite{BT}, \cite{Chn}, \cite{CLN}, \cite{Do}, \cite{GuB}, \cite{GuP}, \cite{Ma}, \cite{PS}, and \cite{Se}. 

Tosatti and Weinkove gave an alternate proof of Cherrier's result in \cite{TW1}. In a very recent paper \cite{TW2}, they showed that the balanced condition is not necessary and the result holds on a general Hermitian manifold. Dinew and Kolodziej studied \eqref{eq:elliptic} in the Hermitian setting with weaker conditions on the regularity of $F$ \cite{DK}.

In this paper we prove the following theorem.

\begin{theorem}\label{maintheorem}
Let $(M,g)$ be a compact Hermitian manifold of complex dimension $n$ with $\operatorname{Vol}(M) = \int \omega^n = 1$. Let $F$ be a smooth function on $M$. There exists a smooth solution $\varphi$ to the parabolic complex Monge-Amp\`{e}re equation \eqref{eq:ma} for all time. Let 
\begin{equation}\label{eq:tildephi}
\tilde{\varphi} = \varphi - \int_M \varphi \ \omega^n.
\end{equation} Then $\tphi$ converges in $C^\infty$ to a smooth function $\tphi_\infty$. Moreover, there exists a unique real number $b$ such that the pair $(b,\tphi_\infty)$ is the unique solution to \eqref{eq:elliptic2}.
\end{theorem}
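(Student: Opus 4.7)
The strategy is to follow Cao's parabolic scheme in the K\"ahler setting, adapted to the Hermitian case using the a priori estimates developed by Cherrier and refined by Tosatti-Weinkove. Short-time existence for \eqref{eq:ma} is standard because the equation is strictly parabolic at $t=0$. Let $T \in (0,\infty]$ denote the maximal existence time; long-time existence will follow once we establish $C^\infty$ bounds on $\varphi$ that are uniform on $[0,T)$.

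The a priori estimates proceed in order. Differentiating \eqref{eq:ma} in $t$ yields $\partial_t \dot\varphi = \Delta_{\omega_\varphi} \dot\varphi$, where $\omega_\varphi = \omega + \sqrt{-1}\partial\bar\partial\varphi$ and $\Delta_{\omega_\varphi} = g_\varphi^{i\bar j}\partial_i\partial_{\bar j}$. The maximum principle then gives $\sup_M |\dot\varphi|(t) \le \sup_M |F|$ uniformly in $t$. For the $C^0$ estimate on $\tphi$, I would adapt the Moser iteration of \cite{TW2} to each time slice, with the role of $F$ there played here by $F + \dot\varphi$ (still uniformly bounded) and using the normalization $\int_M \tphi\,\omega^n = 0$. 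For the second order estimate I would apply the maximum principle to $\log \tr_\omega \omega_\varphi - A\tphi$; as in the Hermitian elliptic case this calculation produces torsion terms from $d\omega\neq 0$ which are absorbed by taking $A$ large enough against the $C^0$ bound on $\tphi$. Once $\Delta\varphi$ is bounded the equation is uniformly parabolic, so Evans-Krylov followed by Schauder yields estimates on all higher derivatives, giving $T=\infty$.

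For convergence as $t\to\infty$, the equation $\partial_t \dot\varphi = \Delta_{\omega_\varphi}\dot\varphi$ and the parabolic maximum principle show that $\max_M \dot\varphi(\cdot,t)$ is non-increasing while $\min_M \dot\varphi(\cdot,t)$ is non-decreasing, so $\operatorname{osc}_M \dot\varphi$ is non-increasing. The plan is to upgrade this to exponential decay using a uniform Poincar\'e inequality on $(M,\omega_\varphi)$, available because the previous step has made $\omega_\varphi$ uniformly equivalent to $\omega$ with uniform geometric bounds. This forces $\dot\varphi \to b$ in $C^\infty$ for some constant $b\in\mathbb{R}$. Then $\partial_t \tphi = \dot\varphi - \int_M \dot\varphi\,\omega^n$ also decays exponentially, hence is integrable in $t$, so $\tphi$ converges in $C^\infty$ to some $\tphi_\infty$ with $\int_M \tphi_\infty\,\omega^n = 0$. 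Passing to the limit in \eqref{eq:ma} shows that $(b,\tphi_\infty)$ satisfies \eqref{eq:elliptic2}, and uniqueness of such a normalized pair follows from the standard maximum principle argument applied to \eqref{eq:elliptic2}.

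The hardest step, I expect, will be the upgrade from non-increasing $\operatorname{osc}_M \dot\varphi$ to \emph{exponential} decay, since this is what produces a single limit constant $b$ rather than merely a bounded orbit. In the K\"ahler setting the analogous argument is cleaner because $\omega_\varphi^n$ has constant total mass, providing a direct integral identity for $\int_M \dot\varphi\,\omega_\varphi^n$; in the Hermitian case total volume is not preserved, so the estimate must be quantitative, presumably combining the uniform $C^2$ control with a time-independent spectral gap for $\Delta_{\omega_\varphi}$.
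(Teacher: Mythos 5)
Your outline agrees with the paper up through the $C^0$ estimates: the bound $|\dot\varphi|\le C$ via the maximum principle is identical, and for the bound on $\tphi$ the paper simply rewrites the flow as $\log(\det g'/\det g)=F-\dot\varphi$ and invokes the elliptic $C^0$ estimate of Tosatti--Weinkove directly, which is essentially your ``Moser iteration at each time slice'' phrased as a black box.

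There is, however, a real gap in your second-order estimate. Applying the maximum principle to $\log\tr_\omega\omega_\varphi - A\tphi$ does \emph{not} suffice on a general Hermitian manifold. With the linear term $-A\tphi$ one can absorb the bad torsion contributions only when $n=2$ or $\omega$ is balanced; in the general case the term coming from $|\partial\tr_g g'|^2_{g'}/(\tr_g g')^2$, after using the first-order criticality condition $\partial Q=0$, produces a cross term involving $\partial\varphi$ of the same order as the good term, and ``taking $A$ large'' does not kill it. This is precisely why Tosatti--Weinkove and Guan--Li pass to an \emph{exponential} test quantity; the paper follows them and uses $Q=\log\tr_g g' + e^{A(\sup\tphi-\tphi)}$. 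With this choice the term $A^2|\partial\varphi|^2_{g'}e^{A(\sup\tphi-\tphi)}$ appears on \emph{both} sides with matching sign and cancels, which is what allows the argument to close. Your test function would stall exactly where the elliptic proof stalls without the exponential.

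The convergence step is also handled by a genuinely different mechanism in the paper, and your proposed route has an unresolved difficulty that you already partly anticipated. You want a spectral gap (Poincar\'e inequality) for $\Delta_{\omega_\varphi}=g_\varphi^{i\bar j}\partial_i\partial_{\bar j}$, but on a non-K\"ahler Hermitian manifold this complex Laplacian is \emph{not} self-adjoint with respect to any natural volume form (it differs from the Laplace--Beltrami operator and from the Chern Laplacian by first-order torsion terms), so ``time-independent spectral gap'' is not a statement you can write down directly, and there is no conserved quantity like $\int_M\dot\varphi\,\omega_\varphi^n$. The paper instead proves from scratch a Li--Yau type gradient estimate for positive solutions of $u_t=g'^{i\bar j}u_{i\bar j}$ (Section 6), deduces a parabolic Harnack inequality, and then applies Cao's trick: set $\xi_m=\sup_M u(\cdot,m-1)-u(\cdot,m-1+t)$, $\psi_m=u(\cdot,m-1+t)-\inf_M u(\cdot,m-1)$, apply Harnack to each with $t_1=\tfrac12$, $t_2=1$, and add, to get $\operatorname{osc}_M\dot\varphi(m)\le\delta\operatorname{osc}_M\dot\varphi(m-1)$ with $\delta<1$, hence exponential decay without any integral identity or spectral theory. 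Your high-level intuition (``need a quantitative mechanism replacing the K\"ahler volume identity'') is right, but the specific tool you reach for is the one that is hardest to justify here; the Harnack inequality is the tool that actually works.

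Two smaller remarks: exponential decay of $\partial_t\tphi$ directly gives only $C^0$ (uniform) convergence of $\tphi$; to get $C^\infty$ convergence the paper combines the pointwise limit with the uniform $C^\infty$ bounds via an Arzel\`a--Ascoli/contradiction argument. And the Evans--Krylov step in the parabolic Hermitian setting is not entirely a black box: the paper proves it using Lieberman's parabolic weak Harnack inequality adapted to the complex Laplacian, so you should be aware that the standard elliptic references do not literally apply.
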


We remark that the main theorem gives a parabolic proof of the result due to Tosatti and Weinkove in \cite{TW2}.

The flow \eqref{eq:ma} could be considered as an analogue to K\"{a}hler-Ricci flow for Hermitian manifolds. In the special case that $-\sqrt{-1} \partial \bar{\partial} \log \det g = \sqrt{-1}\partial \bar{\partial} F$ (such an $F$ always exists under the topological condition $c_1^{BC}(M) = 0$, for example) then taking $\sqrt{-1}\partial \bar{\partial}$ of the flow \eqref{eq:ma} yields $$\frac{\partial \omega'}{\partial t} = \sqrt{-1} \partial \bar{\partial} \log \det g'$$ with initial condition $\omega'(0) = \omega$. In general, the right hand side is the first Chern form, but if we assume K\"{a}hler, it becomes $-\operatorname{Ric}(\omega')$.

When $(M,g)$ is K\"{a}hler, Sz\'{e}kelyhidi and Tosatti showed that a weak plurisubharmonic solution to \eqref{eq:elliptic} is smooth using the parabolic flow \eqref{eq:ma} \cite{SzT}. Their result suggests that the flow could be used to prove a similar result in the Hermitian case. In a recent paper \cite{StT}, Streets and Tian consider a different parabolic flow on Hermitian manifolds and suggest geometric applications for the flow. 

We now give an outline of the proof of the main theorem and discuss how it differs from previous results. In sections 2 through 5, we build up theorems that eventually show that $\varphi$ is smooth. Like in Yau's proof, we derive lower order estimates and then apply Schauder estimates to attain higher regularity for the solution.

In Section 2 we use the maximum principle to show that the time derivative of $\varphi$ is uniformly bounded. We define the normalization $$\tphi = \varphi - \int_M \varphi \ \omega^n.$$ We chose to assume that the volume of $M$ is one to simplify the notation of this normalization and the following calculations. Then using the zeroth order estimate from \cite{TW2}, we prove that $\tphi$ is uniformly bounded.

Section 3 contains a proof of the second order estimate. Specifically, we derive that 
\begin{equation}\label{eq:intro1}
\tr_g{g'}  \leq  C_1 e^{C_2 (\sup_{M\times [0,T)}{\tphi} - \inf_{M\times [0,T)} \tphi)} e^{\left( e^{A\left(\sup_{M\times [0,T)}{\tphi}-\inf_{M\times [0,T)}{\tphi}\right)}-e^{A\left(\sup_{M\times [0,T)}{\tphi}-\tphi\right)}\right)}
\end{equation}
where $[0,T)$ is the maximum interval of existence for $\varphi$ and $C_1$, $C_2$, and $A$ are uniform constants. This estimate is not as sharp as the estimate $$\tr_g{g'} \leq C e^{\left(e^{A\left(\sup_{M} \varphi - \inf_{M} \varphi\right)} - e^{A\left(\sup_{M} \varphi - \varphi \right)}\right)}$$ from Guan and Li or the estimate $$\tr_g{g'} \leq C e^{A\left(\varphi - \inf_{M} \varphi\right)}$$ from Tosatti and Weinkove in the case $n = 2$ or $\omega$ balanced. Cherrier also produced a different estimate. These estimates are from the elliptic case, but they suggest that \eqref{eq:intro1} could be improved. The proof of \eqref{eq:intro1} follows along the method of Tosatti and Weinkove in \cite{TW1}, but there are extra terms to control that arrive in the parabolic case. 

In Section 5, we derive a H\"{o}lder estimate for the time dependent metric $g'_{i\bar{j}}$. This estimate provides higher regularity using a method of Evans \cite{Ev} and Krylov \cite{Kr}. To prove the H\"{o}lder estimate, we apply a theorem of Lieberman \cite{L}, a parabolic analogue of an inequality from Trudinger \cite{Tr}. The method follows closely with the proof of the analogous estimate in \cite{TW1}, but differs in controlling the extra terms that arise from the time dependence of $\varphi$. 

We show that $\varphi$ is smooth and also prove the long time existence of the flow \eqref{eq:ma} in Section 5. The proof uses a standard bootstrapping argument.

Section 6 uses analogues of lemmas from Li and Yau \cite{LY} to prove a Harnack inequality for the equation $$\frac{\partial u}{\partial t} = g'^{i\bar{j}} \partial_i \partial_{\bar{j}} u$$ where $g'^{i\bar{j}}\partial_i \partial_{\bar{j}}$ is the complex Laplacian with respect to $g'$. This differs from the equation $$\left(\triangle - q(x,t) - \frac{\partial}{\partial t}\right) u(x,t) = 0$$ considered by Li and Yau, where $\triangle$ is the Laplace-Beltrami operator.

In Section 7, we apply these lemmas to show that time derivative of $\tphi$ decays exponentially. Precisely, we show that $$\left\vert \frac{\partial \tphi}{\partial t} \right\vert \leq C e^{-\eta t}$$ for some $\eta > 0$. From here we show that $\tphi$ converges to a smooth function $\tphi_\infty$ as $t$ tends to infinity. In fact, the convergence occurs in $C^\infty$ and $\tphi_\infty$ is part of the unique pair $(b,\tphi_\infty)$ solving the elliptic Monge-Amp\`{e}re equation $$\log \frac{\det (g_{i\bar{j}} + \partial_i \partial_{\bar{j}} \tphi_\infty)}{\det g_{i\bar{j}}} = F + b$$ where $$b = \int_M \left( \log \frac{\det (g_{i\bar{j}} + \partial_i \partial_{\bar{j}} \tphi_\infty)}{\det g_{i\bar{j}}} - F \right) \ \omega^n.$$ This provides an alternate proof of the main theorem in \cite{TW2}.

\section{Preliminary estimates}

By standard parabolic theory, there exists a unique smooth solution $\varphi$ to \eqref{eq:ma} on a maximal time interval $[0,T),$ where $0 < T \leq \infty$.

We show that the time derivatives of $\varphi$ and its normalization $\tphi$ are bounded. This fact will be used in the second order estimate.

\begin{theorem}\label{theorem:dphidt}
For $\varphi$ a solution of \eqref{eq:ma} and $\tphi$ as in \eqref{eq:tildephi},
\begin{equation}\label{eq:dphidtbound}
\left\vert \frac{\partial \varphi}{\partial t} \right\vert \leq C, \ \ \left\vert \frac{\partial \tphi}{\partial t} \right\vert \leq C
\end{equation}
where $C$ depends only on the initial data.
\end{theorem}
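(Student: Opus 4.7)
The plan is to derive a linear parabolic equation satisfied by $\dot\varphi := \partial\varphi/\partial t$, and then invoke the maximum principle on compact $M$.

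First, I would differentiate the flow \eqref{eq:ma} in $t$. Using the identity $\partial_t \log\det g' = g'^{j\bar{i}}\,\partial_t g'_{i\bar{j}}$ together with $\partial_t g'_{i\bar{j}} = \partial_i\partial_{\bar{j}}\dot\varphi$ (both $F$ and the background $g$ are time-independent), one obtains
$$\frac{\partial \dot\varphi}{\partial t} = g'^{j\bar{i}}\,\partial_i\partial_{\bar{j}}\dot\varphi.$$
Thus $\dot\varphi$ solves a linear parabolic equation with no zeroth-order term, whose spatial operator is strictly elliptic along the flow since the defining condition $g' > 0$ is preserved on $[0,T)$.

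Next, I would evaluate $\dot\varphi$ at the initial time: since $\varphi(\cdot,0) \equiv 0$, the equation gives $\dot\varphi(x,0) = -F(x)$. The parabolic maximum principle on compact $M$---applied to a linear operator without zeroth-order term via the usual $\epsilon t$-perturbation---then shows that $\max_M \dot\varphi(\cdot,t)$ is nonincreasing and $\min_M \dot\varphi(\cdot,t)$ is nondecreasing in $t$. Therefore
$$-\max_M F \;\leq\; \dot\varphi(x,t) \;\leq\; -\min_M F \qquad \text{for all } t \in [0,T),$$
yielding $|\dot\varphi| \leq \max_M |F|$, a bound depending only on the initial data.

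Finally, differentiating \eqref{eq:tildephi} gives $\partial_t \tphi = \dot\varphi - \int_M \dot\varphi\,\omega^n$, and combining with the previous bound (and $\mathrm{Vol}(M) = 1$) yields $|\partial_t \tphi| \leq 2\max_M |F|$, completing \eqref{eq:dphidtbound}. There is no serious obstacle here; the only point requiring attention is the verification that the linearized operator stays elliptic on the entire interval of existence, which is immediate from $g' > 0$.
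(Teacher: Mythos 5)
Your proof is correct and follows essentially the same route as the paper: differentiate the flow in $t$ to obtain the linear parabolic equation $\partial_t \dot\varphi = g'^{i\bar{j}}\partial_i\partial_{\bar{j}}\dot\varphi$, apply the maximum principle on compact $M$, and then estimate $\partial_t\tilde\varphi$ by the triangle inequality. The only difference is cosmetic: you explicitly evaluate $\dot\varphi(\cdot,0) = -F$, making the constant $\max_M |F|$ explicit, whereas the paper simply bounds by $\sup_M |\dot\varphi(\cdot,0)|$.
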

\begin{proof}
Differentiating \eqref{eq:ma} with respect to $t$ gives
\begin{equation}
\frac{\partial \varphi_t}{\partial t} = g'^{i\bar{j}} \partial_i \partial_{\bar{j}} \varphi_t,
\end{equation}
where $\varphi_t = \frac{\partial \varphi}{\partial t}$. So by the maximum principle,
\begin{equation}
\left\vert \frac{\partial \varphi}{\partial t} (x,t) \right\vert \leq C \sup_{x\in M} \left\vert \frac{\partial \varphi}{\partial t} (x,0) \right\vert. 
\end{equation}
From the definition of $\tphi$,
\begin{equation}
\left\vert \frac{\partial \tphi}{\partial t} \right\vert \leq \left\vert \frac{\partial \varphi}{\partial t} \right\vert + \int \left\vert \frac{\partial \varphi}{\partial t} \right\vert \omega^n \leq 2C.
\end{equation}
\end{proof}

We show that $\tphi$ is bounded in $M \times [0,T)$ using the main theorem of \cite{TW2}.

\begin{theorem}
For $\varphi$ a solution to \eqref{eq:ma} and $\tphi$ the normalized solution, there exists a uniform constant $C$ such that $$\sup_{M \times [0,T)}|\tphi| \leq C$$ where $[0,T)$ is the maximum interval of existence for $\varphi$.
\end{theorem}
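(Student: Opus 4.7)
The plan is to freeze the time variable and invoke the elliptic $C^0$ estimate of Tosatti and Weinkove. Since $\tphi - \varphi$ depends only on $t$, the complex Hessians coincide, $\partial_i\partial_{\bar j}\tphi = \partial_i\partial_{\bar j}\varphi$, and rewriting \eqref{eq:ma} at each fixed $t \in [0,T)$ yields
\begin{equation*}
\log\frac{\det(g_{i\bar j}+\partial_i\partial_{\bar j}\tphi)}{\det g_{i\bar j}} = F + \frac{\partial\varphi}{\partial t}, \qquad \int_M \tphi\,\omega^n = 0.
\end{equation*}
So $\tphi(\cdot,t)$ is a mean-zero smooth solution of an elliptic complex Monge-Amp\`ere equation on $(M,g)$ whose right-hand side, by Theorem \ref{theorem:dphidt}, is uniformly bounded in $L^\infty(M)$ by a constant independent of $t$.

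The main theorem of \cite{TW2} supplies a $C^0$ estimate for precisely this equation on a compact Hermitian manifold: any smooth $\psi$ with $g_{i\bar j} + \partial_i\partial_{\bar j}\psi > 0$ satisfying $\log\det(g_{i\bar j}+\partial_i\partial_{\bar j}\psi)/\det g_{i\bar j} = H$ and $\sup_M \psi = 0$ obeys an oscillation bound depending only on $(M,g)$ and $\|H\|_{L^\infty(M)}$. Applying this to $\tphi(\cdot,t) - \sup_M\tphi(\cdot,t)$ at each $t$ produces a uniform oscillation bound for $\tphi$; the mean-zero normalization $\int_M\tphi\,\omega^n = 0$, which forces $\sup_M\tphi(\cdot,t) \geq 0 \geq \inf_M\tphi(\cdot,t)$, then promotes this oscillation bound to the desired sup bound $\sup_{M\times[0,T)}|\tphi|\leq C$.

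The only point requiring care is that no K\"ahler-type compatibility condition of the form $\int_M e^H\omega^n = \int_M\omega^n$ is needed to invoke the estimate from \cite{TW2}. In the Hermitian setting $(\omega + \sqrt{-1}\partial\bar\partial\psi)^n$ does not integrate to $\int_M\omega^n$ in general, so no such constraint is imposed on $H$; one only needs a smooth solution to exist, which it does at every $t \in [0,T)$ by hypothesis. This is precisely what makes the time-slice reduction clean, and avoids tracking the parabolic analogue of Cherrier's normalizing constant $b$.
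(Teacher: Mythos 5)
Your proof is correct and follows essentially the same route as the paper: freeze time, observe that \eqref{eq:ma} rewrites as an elliptic complex Monge--Amp\`ere equation with right-hand side $F + \partial_t\varphi$ (the paper has a sign typo here, writing $F - \partial_t\varphi$, but this is immaterial since $|\partial_t\varphi|$ is bounded), invoke the $C^0$ oscillation estimate of \cite{TW2}, and convert the oscillation bound to a sup bound on $\tphi$ via the mean-zero normalization. The only cosmetic difference is that the paper applies \cite{TW2} to $\varphi$ and then passes to $\tphi$ at the end, while you apply it directly to $\tphi$; the content is identical.
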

\begin{proof}
We can rearrange \eqref{eq:ma} to
\begin{equation}
\log{\frac{\det{g'_{i\bar{j}}}}{\det{g_{i\bar{j}}}}} = F - \frac{\partial \varphi}{\partial t}
\end{equation}
Since $\left\vert \frac{\partial \varphi}{\partial t} \right\vert$ is bounded by the previous theorem, this is equivalent to the complex Monge-Amp\`{e}re equation of the main theorem in \cite{TW2}. This implies that $\sup_{M} \varphi(.,t) - \inf_{M} \varphi(.,t) \leq C$ for some $C$ depending only on $(M,g)$ and $F$.

Fix $(x,t)$ in $M\times [0,T)$. Since $\int_M \tilde{\varphi} \ \omega^n = 0$, there exists $(y,t)$ such that $\tilde{\varphi}(y,t) = 0.$ Then
\begin{equation}
\vert \tilde{\varphi}(x,t) \vert = \vert \tilde{\varphi}(x,t) - \tilde{\varphi}(y,t) \vert = \left\vert \varphi(x,t) - \varphi(y,t) \right\vert \leq C.
\end{equation}
Thus $\tilde{\varphi}$ is a bounded function on $M\times [0,T)$.
\end{proof}

\section{The second order estimate}

In this section $\triangle = g^{i\bar{j}} \partial_i \partial_{\bar{j}}$ will denote the complex Laplacian corresponding to $g$. Similarly, write $\triangle' = g'^{i\bar{j}} \partial_i \partial_{\bar{j}}$ for the complex Laplacian for the time dependent metric $g'$. We prove an estimate on $\tr_g{g'} = g^{i\bar{j}} g'_{i\bar{j}} = n + \triangle \tphi$.

\begin{theorem}
For $\varphi$ a solution to \eqref{eq:ma} and $\tilde{\varphi}$ the normalized solution, we have the following estimate $$ \tr_g{g'}  \leq  C_1 e^{C_2 (\sup_{M\times [0,T)}{\tphi} - \inf_{M\times [0,T)} \tphi)} e^{\left( e^{A\left(\sup_{M\times [0,T)}{\tphi}-\inf_{M\times [0,T)}{\tphi}\right)}-e^{A\left(\sup_{M\times [0,T)}{\tphi}-\tphi\right)}\right)}$$ where $[0,T)$ is the maximum interval of existence for $\varphi$ and $C_1$, $C_2$, and $A$ are uniform constants. Hence there exists a uniform constant $C$ such that $\tr_g{g'} \leq C$ and also $$\frac{1}{C} g \leq g' \leq C g.$$
\end{theorem}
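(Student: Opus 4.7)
The estimate is tailored for the maximum principle applied to the auxiliary quantity
\[
Q(x,t) \;=\; \log \tr_g g'(x,t) + e^{A(S - \tphi(x,t))},
\]
where $S := \sup_{M\times[0,T)} \tphi$ and $A$ is a large constant to be chosen. The right hand side of the claimed estimate is precisely the form obtained by bounding $Q$ at an interior space-time maximum in terms of the initial data, with the $C_2(\sup\tphi - \inf\tphi)$ factor absorbing contributions that cannot be made negative. Set $L := \partial/\partial t - \triangle'$; fix $T' < T$, restrict to $M \times [0,T']$, and assume the maximum of $Q$ is attained at an interior space-time point (otherwise the bound is trivial).

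The first computation is $L(\log \tr_g g')$ in holomorphic coordinates adapted so that $g_{i\bar j} = \delta_{ij}$ and $g'_{i\bar j}$ is diagonal at the chosen point. Differentiating the parabolic Monge--Amp\`ere equation twice in space and contracting with $g^{i\bar j}$ yields
\[
g^{i\bar j} \partial_i \partial_{\bar j} \frac{\partial \varphi}{\partial t} \;=\; g^{i\bar j} g'^{p\bar q} \partial_i \partial_{\bar j} g'_{p\bar q} - g^{i\bar j} g'^{p\bar s} g'^{r\bar q}(\partial_i g'_{r\bar s})(\partial_{\bar j} g'_{p\bar q}) - g^{i\bar j}\partial_i\partial_{\bar j}\log\det g - \triangle F,
\]
whose leading piece cancels, after commuting covariant derivatives, with the fourth-order piece of $\triangle'(\tr_g g')$. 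What survives is the K\"ahler-case combination---a good gradient term $|\partial \tr_g g'|^2_{g'}/(\tr_g g')^2$, a bounded curvature piece, and a controlled $C\tr_{g'} g$---together with \emph{extra first-derivative terms} coming from the torsion $T^k_{ij} = g^{k\bar\ell}(\partial_i g_{j\bar\ell} - \partial_j g_{i\bar\ell})$ of $g$ and from commuting the time derivative past $\partial_i \partial_{\bar j}$. These torsion contributions are essentially linear in $\partial g'$ and cannot be controlled on their own; they are the main obstacle of the proof.

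The absorption is performed by the barrier $e^{A(S-\tphi)}$. Using $\triangle'\tphi = n - \tr_{g'} g$ together with $L(e^u) = e^u(Lu - |\partial u|^2_{g'})$ for $u = A(S - \tphi)$, one finds
\[
L\!\left(e^{A(S-\tphi)}\right) \;=\; e^{A(S-\tphi)}\left(-A\frac{\partial \tphi}{\partial t} + An - A \tr_{g'} g - A^2 |\partial \tphi|^2_{g'}\right),
\]
and $\left|\frac{\partial \tphi}{\partial t}\right|$ is bounded by Theorem \ref{theorem:dphidt}. Taking $A$ large produces both a strongly negative $-Ae^{A(S-\tphi)}\tr_{g'} g$ and a strongly negative $-A^2 e^{A(S-\tphi)}|\partial \tphi|^2_{g'}$; via Cauchy--Schwarz against the good gradient term from the previous paragraph, the latter absorbs all of the bad torsion contributions, following the computation in \cite{TW1}. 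The only parabolic modification is that the extra first-derivative terms generated by $\partial_t$ turn out to be of the same torsion type and are absorbed identically. At the maximum, $LQ \ge 0$, so after absorption one obtains $0 \le -c\,\tr_{g'} g + C$ there. Combined with the AM--GM inequality $\tr_{g'} g \ge n (\det g/\det g')^{1/n}$, whose right hand side is uniformly positive by Theorem \ref{theorem:dphidt}, this controls $\tr_{g'} g$ at the maximum and---since $\det g'/\det g$ is bounded away from zero---also $\tr_g g'$ there, in the quantitative form stated; the estimate at a general point follows from $e^{A(S-\tphi)} \leq e^{A(S-\inf\tphi)}$. Finally, by the previous theorem $\sup \tphi - \inf \tphi$ is uniformly bounded, so $\tr_g g' \le C$; the eigenvalues $\lambda_i$ of $g'$ relative to $g$ then satisfy $\sum \lambda_i \le C$ and $\prod \lambda_i \in [c,C]$, forcing each $\lambda_i \in [1/C, C]$ and yielding $C^{-1} g \le g' \le Cg$.
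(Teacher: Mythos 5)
Your proposal uses the same auxiliary quantity $Q = \log \tr_g g' + e^{A(S-\tphi)}$ and the same barrier mechanism as the paper, and your computation of $L\bigl(e^{A(S-\tphi)}\bigr)$ is correct. However, there is a genuine gap in how you describe the absorption of the torsion terms, and as written that step would fail.

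You say the barrier's $-A^2 e^{A(S-\tphi)}|\partial\tphi|^2_{g'}$ term ``via Cauchy--Schwarz against the good gradient term \ldots absorbs all of the bad torsion contributions.'' This does not work on its own. The troublesome cross-term produced by $\partial\bigl(\tr_g g'\bigr)$ is (in the paper's normal coordinates) of the form
\[
\frac{2}{\tr_g g'}\,\operatorname{Re}\!\left(\sum_{i,j,k} g'^{i\bar i}\,\partial_j g_{i\bar j}\,\partial_{\bar k} g'_{k\bar i}\right),
\]
which involves third derivatives $\partial_{\bar k} g'_{k\bar i}$ of $\varphi$, not first derivatives $\partial_{\bar i}\tphi$. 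Cauchy--Schwarz between this and the third-order good gradient piece $\sum g'^{i\bar i}g'^{j\bar j}\partial_j g'_{i\bar j}\partial_{\bar j}g'_{j\bar i}$ leaves a torsion-squared residue that is of the type $C\,\tr_{g'}g$, not of the type $|\partial\tphi|^2_{g'}$, so the $-A^2|\partial\tphi|^2_{g'}$ term never enters. The crucial step you omit is to use the first-order condition at the space-time maximum: $\partial_{\bar i}Q = 0$ yields
\[
\frac{1}{\tr_g g'}\sum_k \partial_{\bar i}g'_{k\bar k} = A\,\partial_{\bar i}\varphi\, e^{A(S-\tphi)},
\]
which trades the third-order quantity $\partial_{\bar i}\tr_g g'$ for $A\,\partial_{\bar i}\varphi\, e^{A(S-\tphi)}$. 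Only after this substitution does the torsion cross-term become bounded by $A^2|\partial\varphi|^2_{g'}e^{A(S-\tphi)} + C(\tr_{g'}g)e^{A(S-\tphi)}$, and the first of these then cancels \emph{exactly} with the $A^2|\partial\varphi|^2_{g'}e^{A(S-\tphi)}$ coming from the barrier (it is a cancellation, not a Cauchy--Schwarz absorption). The remaining $C(\tr_{g'}g)e^{A(S-\tphi)}$ is dominated by the barrier's $-A\,\tr_{g'}g\, e^{A(S-\tphi)}$ once $A$ is large. Citing TW1 does not substitute for stating this, since it is precisely the nontrivial point that your outline claims to replace with an incorrect mechanism.

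Two minor slips in the endgame: the inequality you need to pass from $\tr_{g'}g \le C$ to $\tr_g g'\le C'$ at the maximum is
$\tr_g g' \le \tfrac{1}{(n-1)!}\bigl(\tr_{g'}g\bigr)^{n-1}\det g'/\det g$, which requires $\det g'/\det g = e^{F - \varphi_t}$ to be bounded \emph{above} (which follows from Theorem~\ref{theorem:dphidt}); you wrote ``bounded away from zero,'' which is the wrong direction. Also, the lower bound $\tr_{g'}g \ge n(\det g/\det g')^{1/n}$ you invoke is not actually needed at this stage; once $\tr_g g' \le C$ is established globally, the uniform two-sided bound $\det g'/\det g \in [c,C]$ already forces all eigenvalues of $g'$ relative to $g$ into $[1/C',C']$, giving $C'^{-1}g\le g'\le C' g$.
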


\begin{proof}
This proof follows along with the notation and method featured in \cite{TW1}. For brevity we omit some of the calculations and refer the reader to \cite{TW1} and \cite{GL}. Let $E_1$ and $E_2$ denote error terms of the form $$\vert E_1 \vert \leq C_1 \tr_{g'}{g}$$ $$\vert E_2 \vert \leq C_2 (\tr_{g'}{g}) (\tr_{g}{g'})$$ where $C_1$ and $C_2$ are constants depending only on the initial data. We call such a constant depending only on $(M,g)$ and $\sup_M F$ a uniform constant. We remark that by the flow equation \eqref{eq:ma} and estimate \eqref{eq:dphidtbound}, an error term of type $E_1$ is also of type $E_2$ and a uniform constant is of type $E_1$. In general, $C$ will denote a uniform constant whose definition may change from line to line. For a function $\varphi$ on $M$, we write $\varphi_i$ for the ordinary derivative $$\varphi_i = \partial_i \varphi.$$ Similarly, $\varphi_t$ will denote the time derivative of $\varphi$. If $f$ is a function on $M$, we write $\partial f$ for the vector of ordinary derivatives of $f$.

We define the quantity
\begin{equation}
Q = \log{\tr_g{g'} + e^{A\left(\sup_{M\times [0,T)}{\tphi} - \tphi\right)}}
\end{equation}
We note that the form of $Q$ differs here than in \cite{TW1} and Yau's estimate \cite{Yau} and Aubin's estimate \cite{Au}. They consider a quantity of the form $\log{\tr_g{g'}} - A\varphi$. The exponential in the definition of $Q$ helps to control a difficult term in the analysis.

Fix $t'\in [0,T)$. Then let $(x_0,t_0)$ be the point in $M\times [0,t']$ where $Q$ attains its maximum. Notice that if $t_0 = 0$ the result is immediate, so we assume $t_0 > 0$. To start the proof, we need to perform a change of coordinates made possible by the following lemma from \cite{GL}.
\begin{lemma}\label{lem:coord}
There exists a holomorphic coordinate system centered at $x_0$ such that for all $i$ and $j$,
\begin{equation}\label{eq:coord}
g_{i\bar{j}}(x_0) = \delta_{ij}, \ \ \partial_j g_{i\bar{i}}(x_0) = 0,
\end{equation} 
and also such that the matrix $\varphi_{i\bar{j}}(x_0,t_0)$ is diagonal.
\end{lemma} 
Applying $\triangle' - \frac{\partial}{\partial t}$ to $Q$,
\begin{eqnarray}\label{eq:boxq1}
\left(\triangle' - \frac{\partial}{\partial t}\right) Q & = & \frac{\triangle' \tr_g{g'}}{\tr_g{g'}} - \frac{\vert \partial \tr_g{g'} \vert^2_{g'}}{(\tr_g{g'})^2} - \frac{\triangle \frac{\partial \varphi}{\partial t}}{\tr_g{g'}} + A \frac{\partial \tphi}{\partial t} e^{A(\sup_{M\times [0,T)}{\tphi} - \tphi)} \\
& & \ \ + \triangle' e^{A(\sup_{M\times [0,T)}{\tphi} - \tphi)}.
\end{eqnarray}
First we will control the first and third terms in \eqref{eq:boxq1} simultaneously. We apply the complex Laplacian $\triangle$ to the complex Monge-Amp\`{e}re equation:
\begin{eqnarray}
\triangle \frac{\partial \varphi}{\partial t} & = & -g^{k\bar{l}}g'^{p\bar{j}}g'^{i\bar{q}}\partial_k g'_{p\bar{q}} \partial_{\bar{l}} g'_{i\bar{j}} + g^{k\bar{l}}g'^{i\bar{j}} \partial_k \partial_{\bar{l}} g'_{i\bar{j}} + g^{k\bar{l}}g^{p\bar{j}}g^{i\bar{q}}\partial_k g_{p\bar{q}} \partial_{\bar{l}} g_{i\bar{j}} \nonumber \\ 
& & \ \ \ - g^{k\bar{l}}g^{i\bar{j}} \partial_k \partial_{\bar{l}} g_{i\bar{j}} - \triangle F \nonumber \\ 
\label{eq:eqn3} & = & \sum_{i,k} g'^{i\bar{i}} \varphi_{i\bar{i}k\bar{k}} - \sum_{i,j,k} g'^{i\bar{i}}g'^{j\bar{j}}\partial_k g'_{i\bar{j}} \partial_{\bar{k}} g'_{j\bar{i}} + E_1.
\end{eqnarray}
For the first term in \eqref{eq:boxq1}, following a calculation in \cite{TW1} (see equation (2.6) in \cite{TW1}) gives
\begin{equation}\label{eq:eqn4}
\triangle' tr_g{g'}  =  \sum_{i,k} g'^{i\bar{i}} \varphi_{i\bar{i}k\bar{k}} - 2\operatorname{Re}{\left(\sum_{i,j,k} g'^{i\bar{i}}\partial_{\bar{i}}g_{j\bar{k}}\varphi_{k\bar{j}i} \right)} + E_2.
\end{equation}
We will now handle the $2\operatorname{Re}{\left(\sum_{i,j,k} g'^{i\bar{i}}\partial_{\bar{i}}g_{j\bar{k}}\varphi_{k\bar{j}i} \right)}$ term in \eqref{eq:eqn4} using a trick from \cite{GL}. Using Lemma 3.2, at the point $(x_0,t_0)$,
\begin{equation}
\sum_{i,j,k} g'^{i\bar{i}}\partial_{\bar{i}}g_{j\bar{k}}\varphi_{k\bar{j}i} =  \sum_i \sum_{j \neq k} g'^{i\bar{i}}\partial_{\bar{i}}g_{j\bar{k}}\partial_k g'_{i\bar{j}} + E_1.
\end{equation}
Hence,
\begin{eqnarray}
\left\vert 2\operatorname{Re}{\left(\sum_{i,j,k} g'^{i\bar{i}}\partial_{\bar{i}}g_{j\bar{k}}\varphi_{k\bar{j}i} \right)} \right\vert & \leq & \sum_i \sum_{j \neq k} g'^{i\bar{i}} g'^{j\bar{j}} \partial_k g'_{i\bar{j}} \partial_{\bar{k}} g'_{j\bar{i}} + \sum_i \sum_{j \neq k} g'^{i\bar{i}}g'_{j\bar{j}} \partial_{\bar{i}}g_{j\bar{k}} \partial_i g_{k\bar{j}} + E_1 \nonumber \\
\label{eq:eqn5}& \leq & \sum_i \sum_{j \neq k} g'^{i\bar{i}} g'^{j\bar{j}} \partial_k g'_{i\bar{j}} \partial_{\bar{k}} g'_{j\bar{i}} + E_2.
\end{eqnarray}
Putting together \eqref{eq:eqn3}, \eqref{eq:eqn4}, and \eqref{eq:eqn5} gives
\begin{equation}\label{eq:eqn6}
\triangle' \tr_g{g'} - \triangle \frac{d\varphi}{dt} \geq \sum_{i,j} g'^{i\bar{i}} g'^{j\bar{j}} \partial_j g'_{i\bar{j}} \partial_{\bar{j}} g'_{j\bar{i}} + E_2.
\end{equation}

Now we will control the $\frac{\vert \partial \tr_g{g'} \vert^2_{g'}}{(\tr_g{g'})^2}$ term in \eqref{eq:boxq1}. By Lemma 3.2 we have at $(x_0,t_0)$,
\begin{equation}\label{eq:eqn2121}
\partial_i \tr_g{g'} = \partial_i \triangle \varphi = \partial_i \sum_j \varphi_{j\bar{j}} = \sum_j \partial_j \varphi_{i\bar{j}} = \sum_j \partial_j g'_{i\bar{j}} - \sum_j \partial_j g_{i\bar{j}}.
\end{equation}
So
\begin{equation}\label{eq:eqn7}
\frac{\vert \partial \tr_g{g'} \vert^2_{g'}}{\tr_g{g'}} = \frac{1}{tr_g{g'}} \sum_{i,j,k} g'^{i\bar{i}} \partial_j g'_{i\bar{j}} \partial_{\bar{k}} g'_{k\bar{i}} - \frac{2}{tr_g{g'}} \operatorname{Re}{\left(\sum_{i,j,k}g'^{i\bar{i}}\partial_j g_{i\bar{j}} \partial_{\bar{k}} g'_{k\bar{i}} \right)} + E_1.
\end{equation}
As in Yau's second order estimate, we use Cauchy-Schwarz on the first term in \eqref{eq:eqn7} (see \cite{TW1} equation (2.15) for the exact calculation).
\begin{equation}\label{eq:eqn8}
\frac{1}{tr_g{g'}} \sum_{i,j,k} g'^{i\bar{i}} \partial_j g'_{i\bar{j}} \partial_{\bar{k}} g'_{k\bar{i}} \leq \sum_{i,j} g'^{i\bar{i}} g'^{j\bar{j}} \partial_j g'_{i\bar{j}} \partial_{\bar{j}} g'_{j\bar{i}}.
\end{equation}
To deal with the second term in \eqref{eq:eqn7}, since $(x_0,t_0)$ is the maximum point of $Q$, $\partial_{\bar{i}} Q = 0$ implies
\begin{equation}\label{eq:eqn2424}
\frac{1}{\tr_g{g'}} \sum_k \partial_{\bar{i}} g'_{k\bar{k}} = A \partial_{\bar{i}} \varphi e^{A(\sup_{M\times [0,T)}{\tphi} - \tphi)}.
\end{equation}
Using equations \eqref{eq:eqn2424} and \eqref{eq:eqn2121}  we can bound the difficult term:
\begin{align}
&\left\vert \frac{2}{\tr_g{g'}} \operatorname{Re}{\left(\sum_{i,j,k}g'^{i\bar{i}}\partial_j g_{i\bar{j}} \partial_{\bar{k}} g'_{k\bar{i}} \right)} \right\vert \nonumber \\
&\ \ \ =\left\vert \frac{A}{\tr_g{g'}} e^{A(\sup_{M\times [0,T)}{\tphi}-\tphi)} 2\operatorname{Re}{\left(\sum_{i,j,k} g'^{i\bar{i}}\partial_j g_{i\bar{j}} \partial_{\bar{i}}\varphi\right)}\right\vert + E_1 \nonumber \\
&\ \ \ \leq A^2 \vert \partial \varphi \vert^2_{g'} e^{A(\sup_{M\times [0,T)}{\tphi}-\tphi)}+\frac{C(\tr_{g'}{g})}{(\tr_g{g'})^2} e^{A(\sup_{M\times [0,T)}{\tphi}-\tphi)} + E_1 \nonumber \\
\label{eq:eqn9}&\ \ \ \leq A^2 \vert \partial \varphi \vert^2_{g'} e^{A(\sup_{M\times [0,T)}{\tphi}-\tphi)}+ C(\tr_{g'}{g})e^{A(\sup_{M\times [0,T)}{\tphi}-\tphi)} + E_1,
\end{align}
where for the last inequality we used the fact that $\tr_g{g'}$ is bounded from below away from zero by the flow equation \eqref{eq:ma} and estimate \eqref{eq:dphidtbound}.

Plugging \eqref{eq:eqn8} and \eqref{eq:eqn9} into \eqref{eq:eqn7} gives 
\begin{eqnarray}\label{eq:eqn10}
\frac{\vert \partial \tr_g{g'} \vert^2_{g'}}{(\tr_g{g'})^2} & \leq & \frac{1}{(tr_g{g'})^2} \sum_{i,j} g'^{i\bar{i}}g'^{j\bar{j}}\partial_jg'_{i\bar{j}}\partial_{\bar{j}}g'_{j\bar{i}} + A^2\vert \partial \varphi \vert^2_{g'} e^{A(\sup_{M\times [0,T)}{\tphi}-\tphi)} \nonumber \\
& & \ \ \ +C(\tr_{g'}g)e^{A(\sup_{M\times [0,T)}{\tphi}-\tphi)} + E_1.
\end{eqnarray}
By combining \eqref{eq:eqn6} and \eqref{eq:eqn10} with \eqref{eq:boxq1} at the point $(x_0,t_0)$, we get the inequality
\begin{eqnarray}
0 & \geq & \frac{1}{tr_g{g'}} \left(\sum_{i,j} g'^{i\bar{i}} g'^{j\bar{j}} \partial_j g'_{i\bar{j}} \partial_{\bar{j}} g'_{j\bar{i}} + E_2 \right) - \frac{1}{\tr_g{g'}} \sum_{i,j} g'^{i\bar{i}} g'^{j\bar{j}} \partial_j g'_{i\bar{j}} \partial_{\bar{j}} g'_{j\bar{i}} \nonumber \\
& & \ \ \ - A^2\vert \partial \varphi \vert^2_{g'} e^{A(\sup_{M\times [0,T)} \tphi - \tphi)} - \tr_{g'}g e^{A(\sup_{M\times [0,T)} \tphi - \tphi)} + E_1 \nonumber \\
& & \ \ \ + A\frac{\partial \tphi}{\partial t}e^{A(\sup_{M\times [0,T)} \tphi - \tphi)} + \left(-An + A\tr_{g'}g + A^2 \left\vert\partial \varphi \right\vert^2_{g'} \right)e^{A(\sup_{M\times [0,T)}{\tphi}-\tphi)} \nonumber \\
& \geq & -A(C + n)e^{A(\sup_{M\times [0,T)}{\tphi} - \tphi)} + (A-1) \tr_{g'}{g}e^{A(\sup_{M\times [0,T)}{\tphi} - \tphi)}  - C_1 \tr_{g'}{g} \nonumber \\
& \geq & -A(C+n)e^{A(\sup_{M\times [0,T)}{\tphi} - \tphi)} + \left(A-1-C_1\right)\tr_{g'}g.
\end{eqnarray}
Taking $A$ large enough so that $$\left(A-1-C_1\right) > 0$$ implies that at $(x_0, t_0)$,
\begin{equation}\label{eq:eqn12}
\tr_{g'}{g}(x_0,t_0) \leq Ce^{A(\sup_{M\times [0,T)}{\tphi} - \inf_{M\times [0,T)} \tphi)}.
\end{equation}
Then 
\begin{eqnarray}\label{eq:eqn13}
\tr_g{g'}(x_0,t_0) & \leq & \frac{1}{(n-1)!}\left(\tr_{g'}{g}\right)^{n-1}\frac{\det{g'}}{\det{g}} \nonumber \\
& = & \frac{1}{(n-1)!} \left(\tr_{g'}{g}\right)^{n-1} e^{F-\frac{\partial \varphi}{\partial t}} \nonumber \\
& \leq & Ce^{A(n-1)(\sup_{M\times [0,T)}{\tphi} - \inf_{M\times[0,T)} \tphi)}
\end{eqnarray}
For all $(x,t)$ in $M\times [0,t']$ 
\begin{eqnarray}
\log{\tr_{g}{g'}(x,t)} & + & e^{A(\sup_{M\times [0,T)}{\tphi} - \tphi(x,t))} \\ 
& \leq & \log \left(Ce^{A(n-1)(\sup_{M\times [0,T)}{\tphi} - \inf_{M\times [0,T)}\tphi)}\right) + e^{A(\sup_{M\times [0,T)}{\tphi}-\inf_{M\times [0,T)}{\tphi})} \nonumber
\end{eqnarray}
\begin{equation}
\tr_g{g'} \leq C_1 e^{C_2(\sup_{M\times [0,T)}{\tphi} - \inf_{M\times [0,T)} \tphi)} e^{\left( e^{A\left(\sup_{M\times [0,T)}{\tphi}-\inf_{M\times [0,T)}{\tphi}\right)}-e^{A\left(\sup_{M\times [0,T)}{\tphi}-\tphi\right)}\right)}.
\end{equation}
\end{proof}

\section{The H\"{o}lder estimate for the metric}

The estimate in this section is local, so it suffices to work in a domain in $\mathbb{C}^n$. To fix some notation, define the parabolic distance function between two points $(x,t_1)$ and $(y,t_2)$ in $\mathbb{C}^n \times [0,T)$ to be $\left\vert(x,t_1) - (y,t_2)\right\vert = \max(|x - y| , |t_1 - t_2|^{1/2})$.

For a domain $\Omega \in \mathbb{C}^n \times [0,T)$ and a real number $\alpha \in (0,1)$, define for a function $\varphi$ on $\mathbb{C}^n \times [0,T)$,
\begin{equation} \nonumber
[\varphi]_{\alpha,(x_0,t_0)} = \sup_{(x,t) \in \Omega \setminus \{(x_0,t_0)\}} \frac{|\varphi(x,t) - \varphi(x_0,t_0)|}{\left\vert(x,t)-(x_0,t_0)\right\vert^{\alpha}}
\end{equation}
and
\begin{equation}
[\varphi]_{\alpha,\Omega} = \sup_{(x,t)\in \Omega} [\varphi]_{\alpha,(x,t)}.
\end{equation}

We will show that $$[g'_{i\bar{j}}]_{\alpha,\Omega} \leq C$$ for an appropriate choice of $\Omega$. The smoothness of $\varphi$ and $\tphi$ will follow. Given the H\"{o}lder bound for the metric and the second order estimate for $\tphi$, we can differentiate the flow and apply Schauder estimates to achieve higher regularity.

\begin{theorem}
Let $\varphi$ be a solution to the flow \eqref{eq:ma} and $g'_{i\bar{j}} = g_{i\bar{j}} + \varphi_{i\bar{j}}$. Fix $\varepsilon > 0$. Then there exists $\alpha \in (0,1)$ and a constant $C$ depending only on the initial data and $\varepsilon$ such that
\begin{equation}\label{eq:holderThm1}
[g'_{i\bar{j}}]_{\alpha,\Omega} \leq C
\end{equation}
where $\Omega = M \times [\varepsilon, T)$. 
\end{theorem}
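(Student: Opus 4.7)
The plan is to adapt the parabolic Evans–Krylov method to the Hermitian setting, following the elliptic blueprint of \cite{TW1} and using Lieberman's parabolic weak Harnack inequality \cite{L} in place of Trudinger's \cite{Tr}. Working locally in a coordinate ball, for each unit vector $\gamma \in \mathbb{C}^n$ I set $v_\gamma = \partial_\gamma\partial_{\bar\gamma}\varphi$ and differentiate the flow equation \eqref{eq:ma} once in $\gamma$ and once in $\bar\gamma$. Since ordinary partial derivatives commute, this yields an identity of the schematic form
$$\triangle' v_\gamma \;=\; \partial_t v_\gamma \;+\; g'^{i\bar q}g'^{p\bar j}(\partial_{\bar\gamma}g'_{p\bar q})(\partial_\gamma g'_{i\bar j}) \;+\; \partial_\gamma\partial_{\bar\gamma} F \;+\; (\text{torsion}),$$
where the squared gradient term is $\geq 0$ and the torsion consists of products of the form $\partial g\cdot\partial g'$ and $\partial\partial g$. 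These last terms are exactly the $E_1,E_2$ contributions of Section 3 and are handled by the same Cauchy–Schwarz trick as in \eqref{eq:eqn5}, using the second-order estimate to bound $\tr_{g'}g$. Together with Theorem 2.1, this produces the one-sided estimate
$$\Bigl(\frac{\partial}{\partial t} - \triangle'\Bigr) v_\gamma \;\leq\; C,$$
so that $v_\gamma$ is a parabolic subsolution up to a uniform constant, uniformly in $\gamma$.

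With this differential inequality in hand, I run the Krylov oscillation argument on nested parabolic cylinders $Q_r = B_r(x_0)\times(t_0-r^2,t_0]$. Let $\Lambda_R = \sup_{Q_R,\,|\gamma|=1} v_\gamma$. The crucial linearization step uses the concavity of $\log\det$ on positive Hermitian matrices: for any $(x,t),(y,s)\in Q_R$,
$$g'^{i\bar j}(x,t)\bigl[g'_{i\bar j}(y,s) - g'_{i\bar j}(x,t)\bigr] \;\geq\; \log\det g'(y,s) - \log\det g'(x,t),$$
and by the flow equation the right-hand side equals $\partial_t\varphi(y,s) - \partial_t\varphi(x,t) + F(y) - F(x) + \log\det g(y) - \log\det g(x)$, which is bounded by a uniform constant thanks to Theorem 2.1. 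Choosing a finite collection $\{\gamma_k\}$ whose rank-one tensors $\gamma_k\otimes\bar\gamma_k$ span the space of Hermitian matrices (standard Evans–Krylov device, adapted to the Hermitian setting), this bounds $v_\gamma$ at any point of $Q_R$ by an affine combination of the $v_{\gamma_k}$ at any other point. Applying Lieberman's parabolic weak Harnack inequality \cite{L} to each of the nonnegative supersolutions $\Lambda_R - v_{\gamma_k}$ on $Q_R$, and summing, yields an oscillation decay
$$\operatorname{osc}_{Q_r} v_\gamma \;\leq\; \delta\,\operatorname{osc}_{Q_R} v_\gamma \;+\; C R^\beta$$
for some $\delta\in(0,1)$ and $\beta>0$ independent of $\gamma$. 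Iterating on a geometric sequence of radii gives a Hölder modulus of continuity for each $v_\gamma$, and polarizing over a basis of directions produces the bound \eqref{eq:holderThm1} for all components $g'_{i\bar j}$.

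The main obstacle is the combined Hermitian–parabolic bookkeeping. In the Kähler elliptic case the Evans–Krylov argument is essentially algebraic once concavity is invoked, but here the torsion terms arising from the non-vanishing of $\partial g$ must each be shown to fall into the $E_1$ or $E_2$ classes (so that they are absorbed by the second-order estimate), and the extra $\partial_t\varphi$ contributions appearing in both the differential inequality for $v_\gamma$ and the linearization step must be absorbed via Theorem 2.1. The role of the cutoff $t\geq\varepsilon$ is precisely to ensure that the parabolic cylinders $Q_r$ used in the iteration lie strictly inside $M\times[0,T)$: the initial condition $\varphi(\cdot,0)=0$ carries no a priori higher regularity, so the interior weak Harnack inequality is only applicable once $t\geq r^2>0$, which is guaranteed when $r^2\leq\varepsilon$.
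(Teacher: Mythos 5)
Your overall plan matches the paper's: apply the parabolic Evans--Krylov scheme, use concavity of $\log\det$ to linearize, decompose $g'^{i\bar j}(y,s)$ into rank-one pieces via the linear-algebra lemma, and feed the resulting supersolutions into Lieberman's weak Harnack inequality on nested parabolic cylinders. The differential inequality $(\partial_t - \triangle')v_\gamma \leq C$ and the finite-span device are both present in the paper's Section 4.

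There is, however, a genuine gap in your treatment of the time-derivative term in the linearization step. After applying concavity and the flow equation you arrive at a difference
\begin{equation*}
\partial_t\varphi(y,s) - \partial_t\varphi(x,t) + F(y) - F(x) + \log\det g(y) - \log\det g(x),
\end{equation*}
and you dismiss the first difference as ``bounded by a uniform constant thanks to Theorem 2.1.'' But Theorem 2.1 only gives $\vert\partial_t\varphi\vert \leq C$, i.e.\ an $O(1)$ bound, whereas the Krylov oscillation iteration demands that the additive error in
\begin{equation*}
\operatorname{osc}_{Q_r} v_\gamma \leq \delta\,\operatorname{osc}_{Q_R} v_\gamma + (\text{error})
\end{equation*}
be $O(R^\beta)$ as $R\to 0$. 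An $O(1)$ error term destroys the iteration: iterating $\psi(R) \leq \lambda\psi(2R) + C$ only bounds $\psi$, it does not force $\psi(R)\to 0$ like a power of $R$. So the $\partial_t\varphi$ difference cannot simply be absorbed; you would be asserting the $CR^\beta$ conclusion without having earned it. The paper's fix is precisely to \emph{track} the oscillation of $\partial_t\varphi$ together with the oscillations of the second derivatives: it sets $w_0 = -\partial_t\varphi$ (with coefficient $\beta_0 = 1$), notes that $w_0$ satisfies the exact heat equation $(-\partial_t + \triangle')w_0 = 0$ (from equation \eqref{eq:h4}) and hence is also a supersolution eligible for the Harnack lemma, and then defines $\psi(sR) = \sum_{\nu=0}^N (M_{s\nu} - m_{s\nu})$ with the $\nu=0$ term \emph{included}. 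The linearization \eqref{eq:h6} then has a genuinely small right-hand side $C\vert x - y\vert = O(R)$, because the problematic $\partial_t\varphi$ differences appear on the \emph{left} with a definite sign and are part of what is being iterated. You should incorporate this: either include $w_0 = -\partial_t\varphi$ in your collection of oscillating quantities and prove its supersolution property separately, or first establish Hölder continuity of $\partial_t\varphi$ by a De Giorgi--Nash--Moser argument applied to its own linear parabolic equation; without one of these, the oscillation decay claim does not follow.

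Two smaller points. First, the torsion terms in the subsolution inequality for $v_\gamma$ do not require the Cauchy--Schwarz manipulation of \eqref{eq:eqn5}; concavity of $\Phi$ already discards the $\Phi''(\partial_\gamma g')(\partial_{\bar\gamma}g')$ term with the favorable sign, and what remains is $g'^{i\bar j}g_{i\bar j\gamma\bar\gamma} + H_{\gamma\bar\gamma}$, which is bounded directly by the second-order estimate and the smoothness of $g$ and $H$. Second, the purpose of the $t\geq\varepsilon$ cutoff in the paper is as you say, to keep the cylinders away from $t=0$, but note that in the paper's statement the parabolic distance uses $\vert t_1 - t_2\vert^{1/2}$, so when you polarize directions to recover Hölder control of all components $g'_{i\bar j}$ you must also verify the Hölder estimate in the time variable, which follows once $\partial_t\varphi$ is among the tracked quantities.
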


We apply a method due to Evans \cite{Ev} and Krylov \cite{Kr}. The proof itself is essentially contained in \cite{L} and \cite{GT}, but only in the case where the manifold is $\mathbb{R}^n$. Hence we produce a self-contained proof in the notation of this problem. The method of this proof follows closely with the analogous estimate in \cite{TW1} and \cite{Si}. The main issue is applying the correct Harnack inequality to get the estimate.

\begin{proof}

Let $B \in \mathbb{C}^n$ be an open ball about the origin. Fix a point $t_0 \in [\varepsilon, T)$. To prove \eqref{eq:holderThm1} it suffices to show that for sufficiently small $R > 0$ there exists a uniform $C$ and $\delta > 0$ such that 
$$\sum_{i=1}^n \operatorname{osc}_{Q(R)}(\varphi_{\gamma_i\bar{\gamma_i}}) + \operatorname{osc}_{Q(R)}(\varphi_t) \leq C R^\delta$$ 
where $\{\gamma_i\}$ is a basis for $\mathbb{C}^n$ and $Q(R)$ is the parabolic cylinder $$Q(R) = \{ (x,t) \in B \times [0,T) | |x| \leq R, t_0 - R^2 \leq t \leq t_0 \}.$$
We rewrite the flow as
\begin{equation}\label{eq:h1}
\frac{\partial \varphi}{\partial t} = \log \det g'_{i\bar{j}} + H
\end{equation}
where $H = -\log \det g_{i\bar{j}} - F$. We define the operator $\Phi$ on a matrix $A$ by $$\Phi(A) = \log \det A,$$
then \eqref{eq:h1} becomes
\begin{equation}\label{eq:h2}
\frac{\partial \varphi}{\partial t} = \Phi (g') + H.
\end{equation}
By the concavity of $\Phi$, for all $(x,t_1)$ and $(y,t_2)$ in $B \times [0,T)$,
\begin{equation} \nonumber
\sum \frac{\partial \Phi}{\partial a_{i\bar{j}}}(g'(y,t_2)) \left(g'_{i\bar{j}}(x,t_1) - g'_{i\bar{j}}(y,t_2)\right) \geq  \frac{\partial \varphi}{\partial t}(x,t_1) - \frac{\partial \varphi}{\partial t}(y,t_2) - H(x) + H(y). \end{equation}
The Mean Value Theorem applied to $H$ shows that
\begin{equation}\label{eq:h5}
\frac{\partial \varphi}{\partial t}(x,t_1) - \frac{\partial \varphi}{\partial t}(y,t_2) + \sum \frac{\partial \Phi}{\partial a_{i\bar{j}}}(g'(y,t_2))\left(g'_{i\bar{j}}(y,t_2) - g'_{i\bar{j}}(x,t_1)\right) \leq C|x-y|.
\end{equation}

Now we must recall a lemma from linear algebra.

\begin{lemma}\label{lem:LA}
There exists a finite number $N$ of unit vectors $\gamma_{\nu} = (\gamma_{\nu 1}, \ldots, \gamma_{\nu n}) \in \mathbb{C}^n$ and real-valued functions $\beta_{\nu}$ on $B \times [0,T)$, for $\nu = 1,2,\ldots,N$ with
\begin{eqnarray}
& (i) & 0 < C_1 \leq \beta_{\nu} \leq C_2 \\ \nonumber
& (ii) & \gamma_1, \ldots, \gamma_N \ containing \ an \ orthonormal \ basis \ of \ \mathbb{C}^n \nonumber
\end{eqnarray}
such that
$$\frac{\partial \Phi}{\partial a_{i\bar{j}}}\left(g'(y,t_2)\right) = \sum_{\nu = 1}^N \beta_{\nu}(y,t_2) \gamma_{\nu i} \overline{\gamma_{\nu j}}.$$
\end{lemma}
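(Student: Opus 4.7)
The plan is to reduce the lemma to a purely linear-algebraic statement about positive definite Hermitian matrices. A direct computation with $\Phi(A)=\log\det A$ gives
\begin{equation*}
\frac{\partial \Phi}{\partial a_{i\bar{j}}}(A) \;=\; (A^{-1})^{\bar{j}i},
\end{equation*}
so the matrix $M(y,t_2)=\partial\Phi/\partial a_{i\bar j}(g'(y,t_2))$ is, up to a transposition of indices, just $g'(y,t_2)^{-1}$, in particular positive definite Hermitian. By the second-order estimate of Section~3, $\tfrac{1}{C}g \leq g' \leq C g$ uniformly on $M\times[0,T)$, so $M(y,t_2)$ has all eigenvalues in a fixed compact subinterval of $(0,\infty)$ as $(y,t_2)$ varies. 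The whole problem is therefore to decompose an arbitrary element of this compact set of Hermitian matrices.

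First I would isolate the following auxiliary statement: there exist finitely many unit vectors $\gamma_1,\dots,\gamma_N\in\mathbb{C}^n$ containing an orthonormal basis of $\mathbb{C}^n$, together with uniform constants $0<C_1<C_2$, such that every Hermitian matrix $M$ with spectrum in a fixed compact subinterval of $(0,\infty)$ admits a decomposition
\begin{equation*}
M \;=\; \sum_{\nu=1}^N \beta_\nu(M)\, \gamma_{\nu i}\overline{\gamma_{\nu j}}, \qquad C_1\leq \beta_\nu(M)\leq C_2,
\end{equation*}
where $\beta_\nu$ depends continuously on $M$. The construction proceeds in two stages. One first exhibits a spanning family of rank-one Hermitian matrices $\gamma_\nu\overline{\gamma_\nu}^T$ for the real $n^2$-dimensional space of Hermitian matrices; taking the standard basis vectors $e_1,\dots,e_n$ together with the vectors $\tfrac{1}{\sqrt{2}}(e_i\pm e_j)$ and $\tfrac{1}{\sqrt{2}}(e_i\pm\sqrt{-1}e_j)$ for $i<j$ clearly suffices. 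Any Hermitian $M$ therefore has at least one decomposition with real coefficients; picking a particular continuous linear right-inverse to the spanning map gives continuous $\widetilde{\beta}_\nu(M)$. Then, to force strict positivity and boundedness of the coefficients, I would shift by a sufficiently large multiple $\lambda\sum_\nu \gamma_\nu\overline{\gamma_\nu}^T$ of a fixed positive-definite background combination, writing $M=\lambda\sum_\nu\gamma_\nu\overline{\gamma_\nu}^T + (M - \lambda\sum_\nu\gamma_\nu\overline{\gamma_\nu}^T)$ and distributing the second piece over the spanning family via $\widetilde{\beta}_\nu$. Compactness of the range of $M$ ensures that for $\lambda$ large and fixed, the resulting $\beta_\nu(M)$ lie in a fixed interval $[C_1,C_2]\subset(0,\infty)$.

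Applying this to $M(y,t_2)=\partial\Phi/\partial a_{i\bar j}(g'(y,t_2))$ and setting $\beta_\nu(y,t_2):=\beta_\nu(M(y,t_2))$ produces the required decomposition, with condition (i) guaranteed by the compactness argument and condition (ii) secured by having baked an orthonormal basis into the spanning family from the start. The main obstacle is the second stage of the linear-algebra construction: one must simultaneously ensure that the rank-one projections span the space of Hermitian matrices, that the resulting coefficient functions are strictly positive, and that they remain bounded above uniformly over the entire compact set of matrices $M(y,t_2)$. The shift-and-distribute trick handles this, but the choice of $\lambda$ and of the continuous right-inverse must be done carefully; once that is arranged, the uniform upper and lower bounds on $\beta_\nu$ follow immediately from continuity of the construction and compactness of the spectral range provided by Section~3.
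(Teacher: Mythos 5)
The paper does not actually prove this lemma; it is stated as a recall of a classical fact from Evans--Krylov theory (see Lieberman's book \cite{L} or Siu's notes \cite{Si}, both cited nearby). Evaluated on its own terms, your proposal has a genuine gap in the second stage. You fix a spanning family $\{\gamma_\nu\}$ once and for all, pick a \emph{linear} right-inverse $\widetilde{\beta}$ of the map $T(\beta)=\sum_\nu\beta_\nu\gamma_\nu\overline{\gamma_\nu}^{\,T}$, and then shift by a large multiple of $P=\sum_\nu\gamma_\nu\overline{\gamma_\nu}^{\,T}$. But by linearity, $\beta_\nu(M)=\lambda+\widetilde{\beta}_\nu(M-\lambda P)=\lambda\bigl(1-\widetilde{\beta}_\nu(P)\bigr)+\widetilde{\beta}_\nu(M)$; unless $\widetilde{\beta}_\nu(P)=1$ for \emph{every} $\nu$ this diverges as $\lambda\to\infty$, and if $\widetilde{\beta}_\nu(P)=1$ for every $\nu$ then the shift cancels and you are left with $\widetilde{\beta}_\nu(M)$, which has no reason to be positive. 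More fundamentally, no shift from $\ker T$ can make all coefficients simultaneously large, because $\operatorname{tr} T(\beta)=\sum_\nu\beta_\nu$ (each $\gamma_\nu$ is a unit vector), so $\ker T$ lies in the hyperplane $\sum_\nu\beta_\nu=0$ and contains no strictly positive vector.

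In fact, the desired decomposition with all $\beta_\nu\ge C_1>0$ simply does not exist for the specific spanning family you wrote down once the spectral interval $[\lambda_0,\Lambda_0]$ coming from $\tfrac1C g\le g'\le Cg$ is wide enough. For $n=2$, a nonnegative decomposition of $M=\bigl(\begin{smallmatrix}a&c\\\bar c&b\end{smallmatrix}\bigr)$ over your six rank-one projections exists if and only if $|\operatorname{Re}c|+|\operatorname{Im}c|\le\min(a,b)$; taking $a=b$ and $c=re^{\sqrt{-1}\pi/4}$ with $a/\sqrt2<r<\min(a-\lambda_0,\Lambda_0-a)$ gives a Hermitian matrix with eigenvalues $a\pm r$ in $[\lambda_0,\Lambda_0]$ that lies outside the conic hull of your $\gamma_\nu\overline{\gamma_\nu}^{\,T}$, and such $a,r$ exist whenever $\Lambda_0/\lambda_0>3+2\sqrt2$. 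The missing idea is that the finite family of directions must be chosen adaptively using compactness \emph{together with the spectral theorem}: for each $M_0$ in the compact spectral set, an eigenbasis of $M_0$ yields a decomposition of every $M$ in a neighborhood of $M_0$ with coefficients close to the eigenvalues of $M_0$, hence bounded in a fixed interval; a finite subcover then produces the required fixed family $\{\gamma_\nu\}$, and appending the standard basis $e_1,\dots,e_n$ (with small positive coefficients absorbed elsewhere) secures condition~(ii). Spanning is the easy part; the positivity and boundedness constraints are exactly where the eigenvector/compactness argument becomes essential, and a single fixed linear right-inverse cannot substitute for it.
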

We define for $\nu = 1,\ldots,N$, $$w_{\nu} = \partial_{\gamma_{\nu}} \partial_{\bar{\gamma_{\nu}}} \varphi = \sum_{i,j=1}^n \gamma_{\nu i} \overline{\gamma_{\nu j}} \varphi_{i\bar{j}}.$$
We write $w_0 = -\frac{\partial \varphi}{\partial t}$ and $\beta_0 = 1$. Then using the linear algebra lemma, \eqref{eq:h5} can be rewritten as
\begin{equation}\label{eq:h6}
\sum_{\nu = 0}^N \beta_{\nu}(y,t_2) \left(w_{\nu}(y,t_2) - w_{\nu}(x,t_1)\right) \leq C|x-y|.
\end{equation}
Letting $\gamma$ be an arbitrary unit vector in $\mathbb{C}^n$, we differentiate the flow \eqref{eq:ma} along $\gamma$ and $\bar{\gamma}$:
\begin{eqnarray} \nonumber
\frac{\partial \varphi_{\gamma \bar{\gamma}}}{\partial t} & = & \frac{\partial^2 \Phi}{\partial a_{i\bar{j}} \partial a_{k\bar{l}}} (g') g'_{i\bar{j}\gamma} g'_{k\bar{l}\bar{\gamma}} + \frac{\partial \Phi}{\partial a_{i\bar{j}}} (g') g'_{i\bar{j} \gamma \bar{\gamma}} + H_{\gamma \bar{\gamma}} \\ \label{eq:h3}
& \leq & g'^{i\bar{j}} g'_{i\bar{j} \gamma \bar{\gamma}} + H_{\gamma \bar{\gamma}}
\end{eqnarray}
where on the last line we used the concavity of $\Phi$ and the fact that $\frac{\partial \Phi}{\partial a_{i\bar{j}}}(g') = g'^{i\bar{j}}$. Applying $\frac{\partial}{\partial t}$ to \eqref{eq:h2} gives
\begin{eqnarray}\label{eq:h4}
\frac{\partial \varphi_t}{\partial t}  = g'^{i\bar{j}} \varphi_{i\bar{j} t}.
\end{eqnarray}
From \eqref{eq:h3} and \eqref{eq:h4} we have a bounded function $h$ (depending on $g'^{i\bar{j}}$ which is bounded by Theorem 3.1) such that
\begin{equation}\label{eq:h7}
-\frac{\partial w_{\nu}}{\partial t} + g'^{i\bar{j}} \partial_i \partial_{\bar{j}} w_{\nu} \geq h.
\end{equation}

Recall that $t_0$ is a fixed point in $[\varepsilon, T)$. Pick $R > 0$ small enough such that $t_0 - 5R^2 > t_0 / 2$. We define another parabolic cylinder $$\Theta(R) = \{(x,t) \in B \times [0,T) | |x|<R, t_0 - 5R^2 \leq t \leq t_0 - 4R^2 \}.$$
For $s = 1, 2$ and $\nu = 0, 1, \ldots, N$, let $$M_{s\nu} = \sup_{Q(sR)} w_{\nu}, \ m_{s\nu} = \inf_{Q(sR)} w_{\nu},$$ and $$\psi(sR) = \sum_{\nu = 0}^N \left(M_{s\nu} - m_{s\nu}\right).$$

We let $l$ be an integer such that $0 \leq l \leq N$ and $v = M_{2l} - w_l$. To continue we need Theorem 7.37 from \cite{L}. We say that $v \in W^{2,1}_{2n+1}$ if $v_x, v_{ij}, v_{i\bar{j}}, v_{\bar{i} \bar{j}},$ and $v_t$ are in $L^{2n+1}$. We restate the theorem as follows.

\begin{lemma}
Suppose that $v(x,t) \in W^{2,1}_{2n+1}$ satisfies $$-\frac{\partial v}{\partial t} + g'^{i\bar{j}}\partial_i \partial_{\bar{j}} v \leq f$$ and $v \geq 0$ on $Q(4R)$. Then there exists a constant $C$ and a $p > 0$ depending only on the bounds of $g'^{i\bar{j}}$ and the eigenvalues of $g'^{i\bar{j}}$ such that
$$\frac{1}{R^{2n+2}} \left( \int_{\Theta(R)} v^p \right)^{1/p} \leq C \left( \inf_{Q(R)} v + R^{\frac{2n}{2n+1}} ||f||_{n+1} \right).$$
\end{lemma}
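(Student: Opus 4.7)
The plan is to establish this as a weak Harnack inequality for non-negative supersolutions of a uniformly parabolic linear equation in non-divergence form, following the Krylov--Safonov scheme as presented in Lieberman. Viewing $\mathbb{C}^n$ as $\mathbb{R}^{2n}$ by separating real and imaginary coordinates, the operator $-\partial_t + g'^{i\bar{j}}\partial_i \partial_{\bar{j}}$ is uniformly parabolic with ellipticity constants depending only on the bounds for the eigenvalues of $g'^{i\bar{j}}$, which are uniform by Theorem 3.1. So the entire argument reduces to a real-variable weak Harnack inequality with universally controlled ellipticity, and the abstract complex structure plays no further role.

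First I would prove a parabolic Aleksandrov--Bakelman--Pucci (ABP) type maximum principle: if $u$ satisfies $-u_t + g'^{i\bar{j}} u_{i\bar{j}} \leq \tilde f$ on a parabolic cylinder $Q$, then
\[
\sup_Q u \leq \sup_{\partial_p Q} u + C\, \operatorname{diam}(Q)^{2n/(2n+1)} \|\tilde f^+\|_{L^{2n+1}(\Gamma)},
\]
where $\Gamma$ is the parabolic contact set where $u$ equals its monotone upper envelope. The proof touches $u$ from above by paraboloids that are concave in $x$ and non-decreasing in $t$, and applies the area formula to the normal map $(x,t)\mapsto -D_x u(x,t)$, using the arithmetic-geometric mean inequality on the Hessian of $u$ on $\Gamma$ to bring in the exponent $2n+1$.

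Next, I would combine ABP with an explicit barrier function on $Q(4R)$ to derive a density estimate: if $\inf_{Q(R)} v \leq 1$ and $R^{2n/(2n+1)} \|f\|_{n+1}$ is bounded by a universal constant, then for some universal $M,\mu > 0$ one has $|\{v \leq M\} \cap \Theta(R)| \geq \mu |\Theta(R)|$. Geometrically, whenever $v$ is small somewhere in $Q(R)$ it must be small on a definite fraction of the backward cylinder $\Theta(R)$. Iterating this density estimate on a parabolic Calderón--Zygmund cube decomposition, following the standard Krylov--Safonov pattern, then upgrades it to a weak-$L^p$ bound on $v$ over $\Theta(R)$ for some small $p > 0$; integrating the weak-$L^p$ distribution yields the stated $L^p$ inequality. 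The main obstacle is the parabolic ABP step, which is subtler than its elliptic analogue because the envelope is concave in $x$ but only monotone in $t$, so the normal-map Jacobian bound must be combined with the time monotonicity in a single measure estimate. Once ABP is in hand, the density lemma and the cube iteration follow the standard template of the elliptic Krylov--Safonov proof adapted to parabolic cylinders.
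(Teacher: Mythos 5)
The paper does not actually prove this lemma: it is quoted verbatim, with notation adapted, as Theorem 7.37 of Lieberman's textbook \cite{L}, and used as a black box. So there is no ``paper's proof'' to compare against line by line. Your sketch is a proof of that cited result from first principles via the Krylov--Safonov scheme --- pass to real coordinates in $\mathbb{R}^{2n}$, parabolic ABP, barrier plus measure-theoretic density estimate, then a parabolic Calder\'on--Zygmund iteration to a weak-$L^p$ and hence $L^p$ bound --- and this is indeed the right and standard route; it is essentially the argument underlying Lieberman's theorem, so it ``buys'' a self-contained proof where the paper is content to cite. One detail worth fixing: the parabolic ABP estimate is \emph{not} obtained from the normal map $(x,t)\mapsto -D_x u(x,t)$ alone, which maps into $\mathbb{R}^{2n}$ and cannot see the time derivative. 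The Tso/Krylov argument uses the map $(x,t)\mapsto\bigl(D_x u,\, u - x\cdot D_x u\bigr)$ into $\mathbb{R}^{2n+1}$, whose Jacobian on the contact set is $u_t\,\det(-D_x^2 u)$; it is precisely the extra factor of $u_t$ (together with monotonicity of the envelope in $t$) that produces the Lebesgue exponent $2n+1$, i.e.\ $d+1$ with $d=2n$ the real spatial dimension. You gesture at this (``combined with the time monotonicity in a single measure estimate''), but the normal map as you wrote it would not close. Incidentally, the $\|f\|_{n+1}$ in the displayed inequality is a typo in the paper for $\|f\|_{2n+1}$, consistent with the $W^{2,1}_{2n+1}$ hypothesis and the scaling exponent $R^{2n/(2n+1)}$; your sketch implicitly works with the correct exponent.
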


Since $v$ satisfies $-\frac{\partial v}{\partial t} + g'^{i\bar{j}}\partial_i \partial_{\bar{j}} v \leq -h,$ we can apply the Harnack inequality to get
\begin{equation}\label{eq:h8}
\frac{1}{R^{2n+2}} \left( \int_{\Theta(R)} \left( M_{2l} - w_{l} \right)^p \right)^{1/p} \leq C \left( M_{2l} - M_l + R^{\frac{2n}{2n+1}}\right).
\end{equation}
For every $(x,t_1)$ and $(y,t_2)$ in $Q(2R)$, \eqref{eq:h6} gives
$$\beta_l(y,t_2)\left( w_l(y,t_2) - w_l(x,t_1) \right) \leq CR + \sum_{\nu \neq l} \beta_{\nu} \left(w_{\nu}(x,t_1) - w_{\nu}(y,t_2) \right).$$
The definition of $m_{2l}$ allows us to choose $(x,t_1)$ in $Q(2R)$ such that $w_l(x,t_1) \leq m_{2l} + \varepsilon$. Since $\varepsilon$ is arbitrary,
$$w_l(y,t_2) - m_{2l} \leq CR + C_2 \sum_{\nu \neq l} \left( M_{2\nu} - w_{\nu}(y,t_2) \right).$$
After integrating over $\Theta(R)$ and applying \eqref{eq:h8}, we have
\begin{eqnarray}\nonumber
\frac{1}{R^{2n+2}} \left( \int_{\Theta(R)} \left( w_l - m_{2l} \right)^p \right)^{1/p} & \leq & \frac{1}{R^{2n+2}} \left( \int_{\Theta(R)} \left( CR + C_2 \sum_{\nu \neq l} \left( M_{2\nu} - w_{\nu} \right) \right)^p \right)^{1/p} \\ \nonumber
& \leq & C_3 R + C_4 \sum_{\nu \neq l} \frac{1}{R^{2n+2}} \left( \int_{\Theta(R)} \left(M_{2\nu} - w_{\nu} \right)^p \right)^{1/p} \\ \label{eq:h9}
& \leq & C_5 \sum_{\nu \neq l} \left(M_{2\nu} - M_{\nu} \right) + C_6 R^{\frac{2n}{2n+1}}
\end{eqnarray}
where on the last line we used the fact that $R < 1$ is small. Adding \eqref{eq:h8} and \eqref{eq:h9} yields
\begin{eqnarray}
M_{2l} - m_{2l} & \leq & C_7 \sum_{\nu = 0}^N \left( M_{2\nu} - M_{\nu} \right) + C_8 R^{\frac{2n}{2n+1}} \\ \nonumber
& \leq & C_7 \sum_{\nu = 0}^N \left( M_{2\nu} - M_{\nu} + m_{\nu} - m_{2\nu} \right) + C_8 R^{\frac{2n}{2n+1}} \\ \nonumber
& = & C_7 \left( \psi(2R) - \psi(R) \right) + C_8 R^{\frac{2n}{2n+1}}. \nonumber
\end{eqnarray}
Summing over $l$ shows that
$$\psi(2R) \leq C_9 \left( \psi(2R) - \psi(R) \right) + C_{10} R^{\frac{2n}{2n+1}}$$
and thus for some $0 < \lambda < 1$,
$$\psi(R) \leq \lambda \psi(2R) + C_{11} R^{\frac{2n}{2n+1}}.$$
Applying a standard iteration argument (see Chapter 8 in \cite{GT}) shows that $$\psi(R) \leq C R^\delta$$ for some $\delta > 0$, completing the proof.
\end{proof}

\section{Long time existence and smoothness of the normalized solution}

In this section we show that the solution $\varphi$ and its normalization $\tphi$ are smooth and exist for all time, hence proving part of the main theorem. The proof uses a standard bootstrapping argument.

\begin{theorem}
Let $(M,g)$ be a Hermitian manifold and $F$ a smooth function on $M$. Let $\varphi$ be a solution to the flow $$\frac{\partial \varphi}{\partial t} = \log \frac{\det (g_{i\bar{j}} + \varphi_{i\bar{j}})}{\det (g_{i\bar{j}})} - F$$ and let $\tilde{\varphi} = \varphi - \int_M \varphi \ \omega^n.$ Then there are uniform $C^\infty$ estimates for $\tilde{\varphi}$ on $[0,T)$. Moreover, $T = \infty$. 
\end{theorem}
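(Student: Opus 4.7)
The approach is a standard parabolic Schauder bootstrap, taking Theorems 3.1 and 4.1 as the starting point. The key inputs are the two-sided bound $C^{-1} g \leq g' \leq C g$, which ensures that the operator $\triangle'$ is uniformly elliptic on $[0,T)$, and the uniform $C^\alpha$ bound on $g'_{i\bar{j}}$ on $M \times [\varepsilon, T)$. My plan is to linearize the flow by differentiating in spatial directions, apply parabolic Schauder estimates to promote $C^\alpha$ to $C^{2+\alpha}$ for the derivative, iterate until $C^\infty$ is reached, and finally deduce long-time existence from the resulting uniform bounds.

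Concretely, fix a local holomorphic coordinate $z^k$ and set $u = \partial_k \varphi$. Differentiating \eqref{eq:ma} in $\partial_k$ gives the linear uniformly parabolic equation
\begin{equation}\nonumber
\frac{\partial u}{\partial t} - g'^{i\bar{j}}\partial_i \partial_{\bar{j}} u = (g'^{i\bar{j}} - g^{i\bar{j}}) \partial_k g_{i\bar{j}} - \partial_k F,
\end{equation}
with $C^\alpha$ coefficients and $C^\alpha$ right-hand side on $M\times[\varepsilon,T)$. Interior parabolic Schauder estimates (applied in local charts and patched together on the compact manifold $M$) yield a uniform $C^{2+\alpha}$ bound on $u$, hence a $C^{3+\alpha}$ bound on $\varphi$ on $M\times[2\varepsilon,T)$. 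Once $g'_{i\bar{j}}\in C^{1+\alpha}$, differentiating again produces a linear parabolic equation with $C^{1+\alpha}$ coefficients for $\partial_k\partial_l\varphi$, giving $C^{4+\alpha}$ estimates, and so on. Iterating this process yields uniform $C^{k+\alpha}$ bounds on $\varphi$ for every $k$. These transfer directly to $\tilde\varphi$: the spatial derivatives of $\tilde\varphi$ coincide with those of $\varphi$, and the time derivative of $a(t):=\int_M \varphi\,\omega^n$ equals $\int_M (\partial_t\varphi)\,\omega^n$, which inherits any bound that $\partial_t\varphi$ has.

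To extend the bounds down to $t=0$ I use that the initial data $\varphi(\cdot,0)=0$ is smooth, so standard parabolic short-time theory provides $C^\infty$ bounds on $M\times[0,2\varepsilon]$; combined with the bootstrap output on $M\times[\varepsilon,T)$, this produces uniform $C^\infty$ estimates on all of $M\times[0,T)$. For the long-time existence statement, suppose for contradiction that $T<\infty$. The uniform $C^\infty$ bounds on $\tilde\varphi$ force $\tilde\varphi(\cdot,t)$ to converge in $C^\infty$ as $t\uparrow T$ to a smooth limit $\tilde\varphi_T$; since $|a'(t)|\le C$ by Theorem \ref{theorem:dphidt}, $a(t)$ extends continuously (indeed smoothly) to $T$, so $\varphi(\cdot,T)$ itself is a smooth admissible function. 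Applying the short-time existence theorem for \eqref{eq:ma} with this as initial data extends $\varphi$ past $T$, contradicting the maximality of $[0,T)$.

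The only genuinely non-routine point is the very first bootstrap step, where one must check that differentiating the fully nonlinear equation yields a \emph{linear} equation whose coefficients are controlled purely by the already-established $C^\alpha$ bound on $g'_{i\bar{j}}$ and the uniform ellipticity; everything else is iteration of Schauder estimates and the standard patching between short-time smoothness at $t=0$ and the interior estimates on $[\varepsilon,T)$. I do not expect any serious obstacle, which is why the author describes the argument simply as standard bootstrapping.
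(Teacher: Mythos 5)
Your proposal is correct and follows essentially the same route as the paper: differentiate the flow in a spatial direction to obtain a linear uniformly parabolic equation with H\"older-continuous coefficients (coming from Theorems 3.1 and 4.1), apply parabolic Schauder estimates, iterate to get uniform $C^\infty$ bounds on the derivatives of $\varphi$ (and hence on $\tilde\varphi$), and deduce long-time existence by a short-time-existence restart argument. Your write-up is if anything slightly more explicit than the paper's in recording the full right-hand side $(g'^{i\bar j}-g^{i\bar j})\partial_k g_{i\bar j}-\partial_k F$ of the differentiated equation and in justifying that $\varphi(\cdot,T)$ itself (not just $\tilde\varphi$) makes sense as fresh initial data via the bound on $a'(t)=\int_M \partial_t\varphi\,\omega^n$.
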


\begin{proof}
Differentiating the flow with respect to $z^k$ gives
\begin{equation}
\frac{\partial \varphi_k}{\partial t} = g'^{i\bar{j}} \partial_i \partial_{\bar{j}} \varphi_k - F_k - \frac{\partial}{\partial z^k} \log \det g_{i\bar{j}}.  
\end{equation}

The second order estimate estimate implies that the above equation is uniformly parabolic. Theorem 4.1 shows that the coefficients in the above equation are H\"older continuous with exponent $\alpha$. Using the Schauder estimate (see Theorem 4.9 in \cite{L}, for example) gives a uniform parabolic $C^{2+\alpha}$ bound on $\varphi_k$. Similarly, one obtains a uniform parabolic $C^{2+\alpha}$ estimate for $\varphi_{\bar{k}}$. 

But the better differentiability of $\varphi$ allows us to differentiate the flow again and obtain a uniformly parabolic equation with H\"older continuous coefficients. The Schauder estimate will give a uniform parabolic $C^{2+\alpha}$ estimate on $\varphi_{kl}, \varphi_{k\bar{l}}$, and $\varphi_{\bar{k}\bar{l}}$. Repeated application shows that $\tilde{\varphi}$ is uniformly bounded in $C^\infty$. Hence $\tphi$ and thus $\varphi$ are smooth. We note that $\varphi$ is not necessarily bounded in $C^0$. The above iterations only provide regularity for the derivatives of $\varphi$.

To see that $T = \infty$, suppose that for $T < \infty$, $[0,T)$ is the maximal interval for the existence of the solution. Since $\tilde{\varphi}$ is smooth, we can apply short time existence to extend the flow for $\tilde{\varphi}$ to $[0,T+\varepsilon)$, a contradiction.

\end{proof}

\section{The Harnack inequality}

We begin this section by proving lemmas analogous to those of Li and Yau \cite{LY} for the equation $\frac{\partial u}{\partial t} = g'^{i\bar{j}}\partial_i \partial_{\bar{j}} u$ for a positive function $u$ on a Hermitian manifold (see \cite{W} for the proof of these lemmas in the K\"{a}hler case). The lemmas lead to a Harnack inequality, which in turn shows that the time derivative of $\tphi$ decays exponentially. This allows us to prove the convergence of $\tphi$ as $t$ tends to infinity.

In this section, we again use the notation $u_t = \frac{\partial u}{\partial t}$ and $u_i = \partial_i u$ for the ordinary derivatives of a function $u$ on $M$.

Let $u$ be a positive function on $M$. Consider the heat type equation $$u_t = g'^{i\bar{j}} u_{i\bar{j}}$$ where $g'_{i\bar{j}}$ denotes the time dependent metric $g_{i\bar{j}} + \varphi_{i\bar{j}}$. Define $\tilde{\varphi} = \varphi - \int_M \varphi \omega^n$.

Define $f = \log{u}$ and $F = t(|\partial f|^2 - \alpha f_t)$ where $1 < \alpha < 2$. We remark that this $F$ is different from the one in equation \eqref{eq:ma}. Then $$g'^{i\bar{j}}f_{i\bar{j}} - f_t = -|\partial f|^2$$ where $\partial f$ is the vector containing the ordinary derivatives of $f$ and $$|\partial f|^2 = g'^{i\bar{j}} \partial_i f \partial_{\bar{j}} f.$$ Also write $$\left< X,Y \right> = g'^{i\bar{j}} X_i Y_{\bar{j}}$$ for the inner product of two vectors $X$ and $Y$ with respect to $g'_{i\bar{j}}$.

We now prove an estimate that will be useful in applying the maximum principle to $F$.

\begin{lemma} There exist constants $C_1$ and $C_2$ depending only on the bounds of the metric $g'$ such that for $t > 0$, 
$$g'^{k\bar{l}} F_{k\bar{l}} - F_t \geq \frac{t}{2n}\left( |\partial f|^2 - f_t \right)^2 - 2\operatorname{Re} \left< \partial f, \partial F \right> - \left(|\partial f|^2 - \alpha f_t\right) - C_1 t|\partial f|^2 - C_2t.$$
\end{lemma}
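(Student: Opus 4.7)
The plan is a Bochner-type calculation for the operator $L := g'^{k\bar l}\partial_k\partial_{\bar l} - \partial_t$ applied to $F$, modeled on the classical Li--Yau gradient estimate, with careful bookkeeping of the Hermitian (non-K\"ahler) error terms. Since $u_t = g'^{i\bar j}u_{i\bar j}$ and $f = \log u$, a direct calculation gives $Lf = -|\partial f|^2$; writing $F = t(|\partial f|^2 - \alpha f_t)$ then yields
\begin{equation*}
LF = t\bigl[L(|\partial f|^2) - \alpha L(f_t)\bigr] - (|\partial f|^2 - \alpha f_t),
\end{equation*}
reducing the problem to estimating $L(|\partial f|^2)$ and $L(f_t)$.

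For $L(f_t)$: differentiating $Lf = -|\partial f|^2$ in $t$ (using that partial derivatives commute on the smooth function $f$) yields $L(f_t) = -\partial_t|\partial f|^2 - (\partial_t g'^{i\bar j})f_{i\bar j}$, where the last piece is an error since $\partial_t g'^{i\bar j}$ is uniformly bounded by Theorems 3.1 and 5.1. For $L(|\partial f|^2)$: in coordinates with $g'^{i\bar j}(x_0) = \delta_{ij}$ at a fixed point $x_0$, Leibniz-expanding $g'^{k\bar l}\partial_k\partial_{\bar l}(g'^{i\bar j}f_i f_{\bar j})$ and $\partial_t(g'^{i\bar j}f_i f_{\bar j})$ produces the positive squares $\sum_{i,j}|f_{ij}|^2 + \sum_{i,j}|f_{i\bar j}|^2$ together with a third-order cross term handled via
\begin{equation*}
g'^{k\bar l}f_{ik\bar l} = \partial_i(g'^{k\bar l}f_{k\bar l}) - (\partial_ig'^{k\bar l})f_{k\bar l} = \partial_i(f_t - |\partial f|^2) - (\partial_ig'^{k\bar l})f_{k\bar l}.
\end{equation*}
After the $2\operatorname{Re}\langle\partial f_t,\partial f\rangle$ piece coming from this substitution cancels against the corresponding term in $\partial_t|\partial f|^2$, one arrives at
\begin{equation*}
L(|\partial f|^2) = \sum_{i,j}|f_{ij}|^2 + \sum_{i,j}|f_{i\bar j}|^2 - 2\operatorname{Re}\langle \partial|\partial f|^2,\partial f\rangle + \mathcal E,
\end{equation*}
where $\mathcal E$ gathers terms of the schematic form $(\partial g')(\partial f)(\operatorname{Hess} f)$, $(\partial\bar\partial g')|\partial f|^2$, and $(\partial_t g')|\partial f|^2$; each is bounded by Young's inequality as $\delta(\sum|f_{ij}|^2 + \sum|f_{i\bar j}|^2) + C_\delta|\partial f|^2 + C_\delta$, using the uniform $C^\infty$ bounds on $g'$.

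The remaining $\operatorname{Re}\langle\partial f_t,\partial f\rangle$ contribution from $-\alpha L(f_t)$ combines with $-2\operatorname{Re}\langle\partial|\partial f|^2,\partial f\rangle$ into $-(2/t)\operatorname{Re}\langle\partial F,\partial f\rangle$ via $\partial F/t = \partial|\partial f|^2 - \alpha\partial f_t$; multiplying the whole inequality by $t$ produces the desired $-2\operatorname{Re}\langle\partial f,\partial F\rangle$ (noting that the $g'$-inner product satisfies $\operatorname{Re}\langle X,Y\rangle = \operatorname{Re}\langle Y,X\rangle$). The main positive term is obtained by splitting $t\sum|f_{i\bar j}|^2$ in half and applying Cauchy--Schwarz to one half: $\tfrac12 t\sum|f_{i\bar j}|^2 \geq \tfrac{t}{2n}(g'^{i\bar j}f_{i\bar j})^2 = \tfrac{t}{2n}(|\partial f|^2 - f_t)^2$; the retained $\tfrac12 t\sum|f_{i\bar j}|^2 + t\sum|f_{ij}|^2$ absorbs the error terms once $\delta$ is chosen small enough, leaving the remaining errors of the form $C_1 t|\partial f|^2 + C_2 t$. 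The main obstacle is precisely this Hermitian error bookkeeping: in the K\"ahler case first derivatives of $g'$ vanish at a normal coordinate and there are no $\partial_t g'$ contributions at all, whereas here one must carefully separate torsion, curvature, and time-derivative terms and absorb them without damaging the Cauchy--Schwarz factor $1/(2n)$ of the main quadratic term.
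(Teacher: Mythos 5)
Your proposal is correct and follows essentially the same Bochner-type computation as the paper's proof. The only meaningful difference is organizational: you compute $L(|\partial f|^2)$ and $L(f_t)$ separately and then recombine via $LF = t\bigl[L(|\partial f|^2) - \alpha L(f_t)\bigr] - (|\partial f|^2 - \alpha f_t)$, which makes the $f_{tt}$ cancellation and the appearance of $-(|\partial f|^2-\alpha f_t)$ automatic; the paper instead Leibniz-expands $g'^{k\bar l}F_{k\bar l}$ directly and must track $-(\alpha-1)F_t$, $+\alpha F_t$, the $\pm t\alpha(\alpha-1)f_{tt}$ pair, and an $-\tfrac{\alpha}{t}F = -\alpha(|\partial f|^2-\alpha f_t)$ contribution from the $-\alpha t\,\partial_t(g'^{k\bar l}f_{k\bar l})$ term to arrive at the same conclusion, a somewhat more delicate ledger. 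Your choice to normalize $g'^{i\bar j}(x_0)=\delta_{ij}$ at the working point is also harmless (the paper keeps $g'^{i\bar j}$ general but handles the same $\partial g'$, $\partial\bar\partial g'$, $\partial_t g'$ errors by Young), and the final Cauchy--Schwarz step $\tfrac12 t\sum|f_{i\bar j}|^2 \ge \tfrac{t}{2n}(|\partial f|^2-f_t)^2$ matches the paper's use of $|\partial\bar\partial f|^2 \ge \tfrac1n(g'^{k\bar l}f_{k\bar l})^2$. One small remark: even in the K\"ahler(-Ricci flow) case the metric $g'$ is time-dependent, so the $\partial_t g'$ error terms would still be present; what genuinely disappears there are the first-order $\partial g'$ terms at a normal point.
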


\begin{proof}
First calculate $F = -t g'^{i\bar{j}}f_{i\bar{j}} - t(\alpha - 1) f_t$. Then 
\begin{equation}\label{eq:timederiv}
(g'^{i\bar{j}} f_{i\bar{j}})_t = \frac{1}{t^2} F - \frac{1}{t} F_t - (\alpha - 1) f_{tt}
\end{equation}
and 
\begin{eqnarray} \nonumber
F_t & = & |\partial f|^2 - \alpha f_t + t \left(g'^{i\bar{j}} f_{ti} f_{\bar{j}} + g'^{i\bar{j}} f_i f_{t\bar{j}} + \left(\frac{\partial}{\partial t} g'^{i\bar{j}}\right) f_i f_{\bar{j}} - \alpha f_{tt} \right) \\ \label{eq:timederiv2}
& = & |\partial f|^2 - \alpha f_t + 2t \operatorname{Re} \left< \partial f, \partial (f_t) \right> + t \left(\frac{\partial}{\partial t} g'^{i\bar{j}}\right) f_i f_{\bar{j}} - \alpha t f_{tt}.
\end{eqnarray}
We calculate $g'^{k\bar{l}}F_{k\bar{l}}$ to get the desired estimate.
\begin{eqnarray}\nonumber
g'^{k\bar{l}} F_{k\bar{l}} & = & t g'^{k\bar{l}} \Big[ \left(g'^{i\bar{j}}\right)_{k\bar{l}} f_i f_{\bar{j}} + \left(g'^{i\bar{j}}\right)_k f_{i\bar{l}} f_{\bar{j}} + \left( g'^{i\bar{j}} \right)_k f_i f_{\bar{j}\bar{l}} + \left( g'^{i\bar{j}} \right)_{\bar{l}} f_{ik} f_{\bar{j}} + g'^{i\bar{j}} f_{ik\bar{l}} f_{\bar{j}} \\ \label{eq:lapF}
& & \ \ + g'^{i\bar{j}} f_{ik} f_{\bar{j}\bar{l}} + \left(g'^{i\bar{j}}\right)_{\bar{l}} f_i f_{\bar{j}k} + g'^{i\bar{j}} f_{i\bar{l}} f_{\bar{j}k} + g'^{i\bar{j}} f_i f_{\bar{j}k\bar{l}} - \alpha f_{tk\bar{l}} \Big].
\end{eqnarray}
Now we control all of the above terms using the bounds on the metric obtained in Theorem 3.1 and the higher order bounds from Theorem 5.1. For the first term of \eqref{eq:lapF},
$$ \left\vert tg'^{k\bar{l}}\left(g'^{i\bar{j}}\right)_{k\bar{l}} f_i f_{\bar{j}} \right\vert \leq C_1t |\partial f|^2.$$
Let $\varepsilon > 0$. We bound the second and third terms of \eqref{eq:lapF} with the inequalities
$$ \left\vert tg'^{k\bar{l}}\left(g'^{i\bar{j}}\right)_{k} f_{i\bar{l}} f_{\bar{j}} \right\vert \leq \frac{t}{\varepsilon} |\partial f|^2 + t\varepsilon |\partial \bar{\partial} f |^2 $$
and
$$ \left\vert tg'^{k\bar{l}}\left(g'^{i\bar{j}}\right)_{k} f_i f_{\bar{j}\bar{l}} \right\vert \leq \frac{t}{\varepsilon} |\partial f|^2 + t\varepsilon |D^2 f |^2. $$ where $$|\partial \bar{\partial} f|^2 = g'^{k\bar{l}} g'^{i\bar{j}} f_{i\bar{l}} f_{\bar{j} k}, \ \ |D^2 f|^2 = g'^{k\bar{l}} g'^{i\bar{j}} f_{ik} f_{\bar{j} \bar{l}}.$$
Term six is equal to $t|D^2 f|^2$ and term eight equals $t|\partial \bar{\partial} f|^2$. The fifth and ninth terms of \eqref{eq:lapF} combine to give
\begin{eqnarray} \nonumber
t g'^{k\bar{l}} g'^{i\bar{j}} f_{ik\bar{l}}f_{\bar{j}} + t g'^{k\bar{l}} g'^{i\bar{j}} f_i f_{\bar{j}k\bar{l}} 
& = & 2t \operatorname{Re} \left< \partial f, \partial (g'^{k\bar{l}}f_{k\bar{l}}) \right> - tg'^{i\bar{j}} \left(g'^{k\bar{l}}\right)_i f_{k\bar{l}} f_{\bar{j}} \\ \nonumber
& & \ \ - t g'^{i\bar{j}} \left(g'^{k\bar{l}}\right)_{\bar{j}} f_i f_{k\bar{l}} \\ \nonumber
& \geq & 2t \operatorname{Re} \left< \partial f, \partial (g'^{k\bar{l}}f_{k\bar{l}}) \right> - \frac{t}{\varepsilon} |\partial f|^2 - t\varepsilon |\partial \bar{\partial} f|^2 \nonumber
\end{eqnarray}
We use the definition of $F$ to show
\begin{eqnarray} \nonumber
t g'^{k\bar{l}} g'^{i\bar{j}} f_{ik\bar{l}}f_{\bar{j}} & + & t g'^{k\bar{l}} g'^{i\bar{j}} f_i f_{\bar{j}k\bar{l}} \\ \label{eq:ly1}
& \geq & -2\operatorname{Re} \left< \partial f, \partial F \right> -2t(\alpha - 1)\operatorname{Re} \left<\partial f,\partial\left(f_t\right) \right> - \frac{t}{\varepsilon} |\partial f|^2 - t\varepsilon |\partial \bar{\partial} f|^2.
\end{eqnarray}
Applying equation \eqref{eq:timederiv2} to \eqref{eq:ly1} gives
\begin{eqnarray} \nonumber
t g'^{k\bar{l}} g'^{i\bar{j}} f_{ik\bar{l}}f_{\bar{j}} & + & t g'^{k\bar{l}} g'^{i\bar{j}} f_i f_{\bar{j}k\bar{l}} \\ \nonumber
& \geq & -2\operatorname{Re} \left< \partial f, \partial F \right> -(\alpha - 1)F_t + (\alpha - 1)\left(|\partial f|^2 - \alpha f_t \right) \\ \nonumber
& & \ \ + t(\alpha -1)\left(\frac{\partial}{\partial t}g'^{i\bar{j}}\right)f_i f_{\bar{j}} - t\alpha (\alpha -1)f_{tt} - \frac{t}{\varepsilon} |\partial f|^2 - t\varepsilon |\partial \bar{\partial} f|^2 \\ \nonumber
& \geq & -2\operatorname{Re} \left< \partial f, \partial F \right> -(\alpha - 1)F_t + (\alpha - 1)\left(|\partial f|^2 - \alpha f_t \right) \\ \nonumber
& & \ \ - C_2 t |\partial f|^2 - t\alpha (\alpha -1)f_{tt} - \frac{t}{\varepsilon} |\partial f|^2 - t\varepsilon |\partial \bar{\partial} f|^2. \\ \nonumber
\end{eqnarray}
The final term of \eqref{eq:lapF} becomes, using \eqref{eq:timederiv}
\begin{eqnarray} \nonumber
-\alpha t g'^{k\bar{l}} f_{tk\bar{l}} & = & \alpha t \left(\frac{\partial}{\partial t}g'^{k\bar{l}}\right)f_{k\bar{l}} - \alpha t \frac{\partial}{\partial t} \left(g'^{k\bar{l}} f_{k\bar{l}} \right) \\ \nonumber
& \geq & -\frac{Ct}{\varepsilon} - t\varepsilon |\partial \bar{\partial} f|^2 -\frac{\alpha}{t} F + \alpha F_t + t\alpha (\alpha - 1)f_{tt}. \nonumber
\end{eqnarray}
We put all of the above in to \eqref{eq:lapF}, which shows that
\begin{eqnarray} \nonumber
g'^{k\bar{l}}F_{k\bar{l}} & \geq & F_t - 2\operatorname{Re}\left< \partial f, \partial F\right> - \left(|\partial f|^2-\alpha f_t\right) + t(1-4\varepsilon)|\partial \bar{\partial} f|^2 \\ \nonumber 
& & \ \  + t(1-2\varepsilon)|D^2 f|^2 - t\left(C_1+C_2+\frac{6}{\varepsilon}\right) |\partial f|^2 - \frac{Ct}{\varepsilon}. \nonumber
\end{eqnarray}
Taking $\varepsilon$ sufficiently small and applying the arithmetic-geometric mean inequality $$|\partial \bar{\partial} f|^2 \geq \frac{1}{n}\left(g'^{k\bar{l}} f_{k\bar{l}} \right)^2 = \frac{1}{n}\left(|\partial f|^2 - f_t\right)^2,$$ we see that
$$g'^{k\bar{l}} F_{k\bar{l}} - F_t \geq \frac{t}{2n}\left( |\partial f|^2 - f_t \right)^2 - 2\operatorname{Re} \left< \partial f, \partial F \right> - \left(|\partial f|^2 - \alpha f_t\right) - C t|\partial f|^2 - C t.$$

\end{proof}

Using the previous lemma, we derive an estimate which will be used to prove the Harnack inequality.

\begin{lemma}
There exist constants $C_1$ and $C_2$ depending only on the bounds of the metric $g'$ such that for $t > 0$,  $$|\partial f|^2 - \alpha f_t \leq C_1 + \frac{C_2}{t}.$$ 
\end{lemma}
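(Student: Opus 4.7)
The plan is to apply the parabolic maximum principle to $F = t(|\partial f|^2 - \alpha f_t)$ on $M \times [0, T']$ for an arbitrary $T' > 0$; the desired bound $|\partial f|^2 - \alpha f_t \leq C_1 + C_2/T'$ at time $T'$ is equivalent to $F(\cdot, T') \leq C_2 + C_1 T'$, so it suffices to establish a linear-in-$t$ upper bound for $F$. Since $F \equiv 0$ when $t=0$ and $M$ is compact, either $\sup_{M\times[0,T']} F \leq 0$ (and the claim is immediate) or the supremum is attained at some $(x_0,t_0)$ with $t_0 > 0$.

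At such a maximum point one has $\partial F(x_0,t_0) = 0$, $F_t(x_0,t_0) \geq 0$, and $g'^{k\bar{l}}F_{k\bar{l}}(x_0,t_0) \leq 0$, so the inner-product term in the previous lemma drops out and we obtain
\[
0 \;\geq\; \frac{t_0}{2n}\bigl(|\partial f|^2 - f_t\bigr)^2 - \bigl(|\partial f|^2 - \alpha f_t\bigr) - C_1 t_0 |\partial f|^2 - C_2 t_0.
\]
Setting $a = |\partial f|^2 \geq 0$ and $\beta = |\partial f|^2 - \alpha f_t$, I would then use the algebraic identity $|\partial f|^2 - f_t = ((\alpha-1)a + \beta)/\alpha$, which gives $(|\partial f|^2 - f_t)^2 \geq ((\alpha-1)^2 a^2 + \beta^2)/\alpha^2$ whenever $\beta \geq 0$ (and one may assume $\beta(x_0,t_0) > 0$, else $F(x_0,t_0) \leq 0$ and the bound is trivial).

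Substituting this lower bound into the inequality produces two nonnegative quadratic terms $\frac{t_0(\alpha-1)^2}{2n\alpha^2}a^2$ and $\frac{t_0}{2n\alpha^2}\beta^2$ on the right-hand side; the $a^2$ term absorbs $C_1 t_0 a$ via Young's inequality, leaving $\frac{t_0}{2n\alpha^2}\beta^2 \leq \beta + C_3 t_0$. Multiplying through by $t_0$ and using $F(x_0,t_0) = t_0 \beta$ yields a quadratic inequality in $F$ whose resolution gives $F(x_0,t_0) \leq C_4 + C_5 t_0 \leq C_4 + C_5 T'$; since $F$ attains its maximum at $(x_0,t_0)$, this bound propagates to all of $M \times [0,T']$, and evaluating at time $T'$ yields the claim.

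The main obstacle is the mismatch between the quadratic quantity $(|\partial f|^2 - f_t)^2$ furnished by the preceding lemma and the linear quantity $\beta = |\partial f|^2 - \alpha f_t$ that we want to bound. Bridging this requires both the algebraic identity above, which forces $\alpha > 1$ to produce a strictly positive coefficient of $a^2$, and the Young inequality step to dispose of the stray $C_1 t |\partial f|^2$ term; the exponent $\alpha$ in the definition of $F$ was chosen precisely to make these manipulations go through.
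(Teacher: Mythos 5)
Your proof is correct and follows the same overall strategy as the paper (parabolic maximum principle applied to $F$ at an interior maximum, using Lemma 6.1, with $\partial F = 0$ killing the cross term and $F_t \geq 0$, $g'^{k\bar l}F_{k\bar l} \leq 0$ giving the key inequality at $(x_0,t_0)$). Where you diverge is in the algebra that extracts the linear-in-$t$ bound on $F$: the paper splits into the two cases $f_t(x_0,t_0) \geq 0$ and $f_t(x_0,t_0) < 0$, and in the first case drops $\alpha$ to $1$ while in the second case uses $(|\partial f|^2 - f_t)^2 \geq |\partial f|^4$ and $\geq f_t^2$ to bound $|\partial f|^2$ and $-\alpha f_t$ separately; you instead use the substitution $|\partial f|^2 - f_t = ((\alpha-1)a+\beta)/\alpha$ together with the observation that, for $\beta \geq 0$, the cross term $2(\alpha-1)a\beta$ is nonnegative, yielding $((\alpha-1)a+\beta)^2 \geq (\alpha-1)^2 a^2 + \beta^2$ and hence two good quadratic terms in one shot. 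That unifies the two cases into a single computation and makes transparent why $\alpha > 1$ is needed (the coefficient $(\alpha-1)^2$ must be positive to absorb the stray $C_1 t_0 a$ term via Young). The paper's version is arguably lower-tech at each individual step, but the two-case bookkeeping is longer and less illuminating; your version is cleaner and reaches the same quadratic-in-$F$ inequality and hence the same conclusion.
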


\begin{proof}

Fix $T > 0$ and let $(x_0,t_0)$ in $M \times [0,T]$ be where $F$ attains its maximum. Note that we can take $t_0 > 0$. Then at $(x_0,t_0)$, from the previous lemma,
\begin{equation}\label{eq:conv1}
\frac{t_0}{2n}\left( |\partial f|^2 - f_t \right)^2 - \left(|\partial f|^2 - \alpha f_t\right) \leq C_1 t_0 |\partial f|^2 + C_2 t_0.
\end{equation}
First we assume that $f_t(x_0,t_0) \geq 0$, then the $\alpha$ in the above inequality can be dropped to give
\begin{equation}
\frac{t_0}{2n}\left( |\partial f|^2 - f_t \right)^2 - \left(|\partial f|^2 - f_t\right) \leq C_1 t_0 |\partial f|^2 + C_2 t_0 . \nonumber
\end{equation}
We factor the above to get
\begin{equation}
\frac{1}{2n}\left(|\partial f|^2 - f_t\right)\left(|\partial f|^2 - f_t - \frac{2n}{t_0}\right) \leq C_1 |\partial f|^2 + C_2. \nonumber
\end{equation}
Hence,
\begin{equation}
|\partial f|^2 - f_t \leq C_3|\partial f| + C_4 + \frac{C_5}{t_0}. \nonumber
\end{equation}
There exists a constant $C_6$ such that 
\begin{equation}
C_3 |\partial f| \leq \left(1-\frac{1}{\alpha}\right)|\partial f|^2 + C_6. \nonumber
\end{equation}
We plug this in to the previous inequality, showing that
\begin{equation}\label{eq:conv4}
\frac{1}{\alpha}|\partial f|^2 - f_t \leq C_7 + \frac{C_5}{t_0}.
\end{equation}
At the point $(x_0,t_0)$, we have
\begin{equation}
F(x_0,t_0) = t_0\left( |\partial f|^2(x_0,t_0) - \alpha f_t(x_0,t_0) \right) \leq C_{8} t_0 + C_5. \nonumber
\end{equation}
Hence for all $x$ in $M$,
\begin{eqnarray} \nonumber
F(x,T) & \leq & F(x_0,t_0) \\ \nonumber
& \leq & C_{8} t_0 + C_5 \\ \nonumber
& \leq & C_{8} T + C_5 \nonumber
\end{eqnarray}
completing the proof for this case.

Now we consider the case where $f_t(x_0,t_0) < 0$. Using \eqref{eq:conv1} at the point $(x_0,t_0)$,
\begin{equation}
\frac{t_0}{2n}|\partial f|^4 - |\partial f|^2 \leq C_1 t_0 |\partial f|^2 + C_2 t_0 - \alpha f_t. \nonumber
\end{equation}
We factor the above to get
\begin{equation}
|\partial f|^2 \left( \frac{1}{2n} |\partial f|^2 - \frac{1}{t_0} - C_1\right) \leq C_2 - \frac{\alpha}{t_0} f_t. \nonumber
\end{equation}
Hence,
\begin{equation}\label{eq:conv2}
|\partial f|^2 \leq C_{3} + \frac{C_{4}}{t_0} - \frac{1}{2}f_t.
\end{equation}
We use \eqref{eq:conv1} again and the condition that $f_t(x_0,t_0) < 0$ to see that
\begin{equation}
\frac{t_0}{2n}f_t^2+\alpha f_t \leq C_1 t_0|\partial f|^2 + |\partial f|^2 + C_2 t_0. \nonumber
\end{equation}
By factoring the above, we show that
\begin{equation}
\frac{1}{2n}\left(-f_t\right)\left(-f_t-\frac{2n\alpha}{t_0}\right) \leq C_1|\partial f|^2 + \frac{1}{t_0} |\partial f|^2 + C_2. \nonumber
\end{equation}
And so
\begin{equation}\label{eq:conv3}
-f_t \leq C_{5} + \frac{C_{6}}{t_0} + \frac{1}{2} |\partial f|^2.
\end{equation}
We plug \eqref{eq:conv3} in to \eqref{eq:conv2}, arriving at
\begin{equation}
|\partial f|^2 \leq C_3 + \frac{C_4}{t_0} + \frac{C_5}{2} + \frac{C_6}{2t_0} + \frac{1}{4}|\partial f|^2. \nonumber
\end{equation}
This provides the following estimate for $|\partial f|^2$:
\begin{equation}\label{eq:thing1}
|\partial f|^2 \leq C_7 + \frac{C_8}{t_0}.
\end{equation}
Similarly, we can show that
\begin{equation}\label{eq:thing2}
-\alpha f_t \leq C_9 + \frac{C_{10}}{t_0}.
\end{equation}
We add \eqref{eq:thing1} and \eqref{eq:thing2} to obtain the estimate
\begin{equation}
|\partial f|^2 - \alpha f_t \leq C_{11} + \frac{C_{12}}{t_0}. \nonumber
\end{equation}
Repeating the argument after \eqref{eq:conv4} completes this case and hence the proof. 

\end{proof}

We use the previous lemma to derive a Harnack inequality similar to that of Li and Yau in the case of a Hermitian manifold.

\begin{lemma}\label{lemma:harnack2}
For $0 < t_1 < t_2$, $$\sup_{x\in M} u(x,t_1) \leq \inf_{x\in M} u(x,t_2) \left(\frac{t_2}{t_1}\right)^{C_2} \exp{\left( \frac{C_3}{t_2-t_1} + C_1(t_2-t_1) \right)}$$ where $C_1, C_2$ and $C_3$ are constants depending only on the bounds of the metric $g'$.
\end{lemma}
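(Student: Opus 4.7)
The plan is to follow the classical Li--Yau strategy: integrate the pointwise gradient bound from Lemma 6.2 along a carefully chosen space-time path. Set $f = \log u$ and fix arbitrary points $x_1, x_2 \in M$. Let $\gamma : [t_1,t_2] \to M$ be a constant-$g$-speed minimal geodesic from $x_1$ at time $t_1$ to $x_2$ at time $t_2$, where $g$ is the fixed background Hermitian metric.

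Along the space-time curve $(\gamma(s), s)$ one computes
\begin{equation*}
\frac{d}{ds} f(\gamma(s),s) = 2\operatorname{Re}(f_i \dot\gamma^i) + f_t.
\end{equation*}
Cauchy--Schwarz in the $g'$ metric followed by Young's inequality with weight $\alpha$ (the same $\alpha \in (1,2)$ as in Lemma 6.2) gives
\begin{equation*}
2\operatorname{Re}(f_i \dot\gamma^i) \geq -2 |\partial f|_{g'} |\dot\gamma|_{g'} \geq -\frac{1}{\alpha}|\partial f|^2 - \alpha |\dot\gamma|^2_{g'}.
\end{equation*}
Combining this with the bound $|\partial f|^2 - \alpha f_t \leq C_1 + C_2/s$ from Lemma 6.2 yields
\begin{equation*}
\frac{d}{ds} f(\gamma(s),s) \geq -\frac{1}{\alpha}\Big(C_1 + \frac{C_2}{s}\Big) - \alpha |\dot\gamma|^2_{g'}.
\end{equation*}

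Now I integrate over $[t_1, t_2]$. Because $g'$ is uniformly equivalent to $g$ by Theorem 3.1 and $M$ is compact, the constant-$g$-speed choice of $\gamma$ gives
\begin{equation*}
\alpha \int_{t_1}^{t_2} |\dot\gamma|_{g'}^2\,ds \leq C \frac{d_g(x_1,x_2)^2}{t_2-t_1} \leq \frac{C \operatorname{diam}_g(M)^2}{t_2-t_1} =: \frac{\widetilde C_3}{t_2-t_1}
\end{equation*}
for a uniform constant $\widetilde C_3$. Thus
\begin{equation*}
f(x_2,t_2) - f(x_1,t_1) \geq -\frac{C_1}{\alpha}(t_2-t_1) - \frac{C_2}{\alpha}\log\frac{t_2}{t_1} - \frac{\widetilde C_3}{t_2-t_1}.
\end{equation*}
Exponentiating and then taking the supremum in $x_1$ and infimum in $x_2$ delivers the claimed Harnack inequality, with the constants in the statement being $C_1/\alpha$, $C_2/\alpha$, and $\widetilde C_3$ respectively.

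The main obstacle is cosmetic rather than substantive: one must use Young's inequality with exactly the weight $\alpha$ so that the quantity $|\partial f|^2 - \alpha f_t$ supplied by Lemma 6.2 appears intact (any other choice would leave an unwanted multiple of $f_t$), and one must invoke Theorem 3.1 to convert the $g'$-energy of $\dot\gamma$ into a uniform bound in terms of the $g$-diameter of $M$. Given those two points, the remainder is a direct integration along the path.
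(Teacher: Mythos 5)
Your proposal is correct and follows essentially the same Li--Yau path-integration argument as the paper; the only differences are cosmetic (you parametrize the path over $[t_1,t_2]$ rather than $[0,1]$, and you phrase the key inequality as Young's inequality with weight $\alpha$ where the paper completes the square, but these produce identical bounds). You are slightly more explicit than the paper about invoking the uniform equivalence $g' \sim g$ from Theorem 3.1 to control $\int |\dot\gamma|^2_{g'}$ by the $g$-diameter of $M$, which is a welcome clarification.
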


\begin{proof}
Let $x,y \in M$, and define $\gamma$ to be the minimal geodesic (with respect to the initial metric $g_{i\bar{j}}$) with $\gamma(0) = y$ and $\gamma(1)=x$. Define a path $\zeta : [0,1] \to M \times [t_1,t_2]$ by $\zeta(s)=\left(\gamma(s),(1-s) t_2 + s t_1\right)$. Then using Lemma 6.2,

\begin{eqnarray}
\log{\frac{u(x,t_1)}{u(y,t_2)}} & = & \int_0^1 \frac{d}{ds} f(\zeta(s)) \ ds \\ \nonumber
& = & \int_0^1 \left( \left< \dot{\gamma},2\partial f \right> - (t_2-t_1)f_t\right) \ ds \\ \nonumber
& \leq & \int_0^1 -\frac{t_2-t_1}{\alpha}\left(|\partial f| - \frac{\alpha |\dot{\gamma}|}{(t_2-t_1)}\right)^2 + \frac{\alpha|\dot{\gamma}|^2}{(t_2-t_1)} \\ \nonumber
& & \ \ \ + C_{1}(t_2-t_1) + C_{2}\frac{t_2-t_1}{t} \ ds \\ \nonumber
& \leq & \int_0^1 \frac{C_{19}}{t_2-t_1} + C_{17}(t_2-t_1) + C_{18} \frac{t_2-t_1}{t} \ ds \\ \nonumber
& = & \frac{C_{3}}{t_2-t_1} + C_{1}(t_2-t_1) + C_{2} \log{\left(\frac{t_2}{t_1}\right)}
\end{eqnarray} 
Exponentiating both sides completes the proof.
\end{proof}

\section{Convergence of the flow}

With the Harnack inequality, we complete the proof of the main theorem by showing the convergence of $\tphi$ (cf. \cite{Cao}). 

\begin{proof}

Define $u = \frac{\partial \varphi}{\partial t}$. Then $$\frac{\partial u}{\partial t} = g'^{i\bar{j}} \partial_i \partial_{\bar{j}} u.$$
Let $m$ be a positive integer and define $$\xi_m(x,t) = \sup_{y\in M} u(y,m-1) - u(x,m-1+t)$$ $$\psi_m(x,t) = u(x,m-1+t) - \inf_{y\in M} u(y,m-1).$$ These functions satisfy the heat type equations $$\frac{\partial \xi_m}{\partial t} = g'^{i\bar{j}}(m-1+t) \partial_i \partial_{\bar{j}} \xi_m$$ $$\frac{\partial \psi_m}{\partial t} = g'^{i\bar{j}}(m-1+t) \partial_i \partial_{\bar{j}} \psi_m.$$ 

First consider the case where $u(x,m-1)$ is not constant. Then $\xi_m$ is positive for some $x$ in $M$ at time $t = 0$. By the maximum principle, $\xi_m$ must be positive for all $x$ in $M$ when $t > 0$. Similarly, $\psi_m$ is positive everywhere when $t > 0$. Hence we can apply Lemma \eqref{lemma:harnack2} with $t_1 = \frac{1}{2}$ and $t_2 = 1$ to get 
$$\sup_{x\in M} u(x,m-1) - \inf_{x\in M} u\left(x,m-\frac{1}{2}\right) \leq C \left( \sup_{x\in M} u(x,m-1) - \sup_{x\in M} u(x,m)\right)$$ $$\sup_{x\in M} u\left(x,m- \frac{1}{2} \right) - \inf_{x\in M} u\left(x,m-1 \right) \leq C \left( \inf_{x\in M} u(x,m) - \inf_{x\in M} u(x,m-1)\right).$$
We define the oscillation $\theta(t) = \sup_{x\in M} u(x,t) - \inf_{x\in M} u(x,t)$. Adding the above inequalities gives $$\theta(m-1) + \theta \left(m-\frac{1}{2}\right) \leq C\left( \theta(m-1) - \theta(m) \right).$$
Rearranging and setting $\delta = \frac{C-1}{C} < 1$ yields $$\theta(m) \leq \delta \theta(m-1).$$ By induction, $$\theta(t) \leq Ce^{-\eta t}$$
where $\eta = -\log \delta$. Note that if $u(x,m-1)$ is constant, this inequality is still true. 

Fix $(x,t)$ in $M\times [0,\infty)$. Since $$\int_M \frac{\partial \tilde{\varphi}}{\partial t} \omega^n  = 0,$$ there exists a point $y$ in $M$ such that $\frac{\partial \tilde{\varphi}}{\partial t}(y,t) = 0$. 

\begin{eqnarray}
\left\vert \frac{\partial \tilde{\varphi}}{\partial t}(x,t) \right\vert & = & \left\vert \frac{\partial \tilde{\varphi}}{\partial t}(x,t) - \frac{\partial \tilde{\varphi}}{\partial t}(y,t) \right\vert \\ \nonumber
& = & \left\vert \frac{\partial \varphi}{\partial t}(x,t) - \frac{\partial \varphi}{\partial t}(y,t) \right\vert \\ \nonumber
& \leq & Ce^{-\eta t}.
\end{eqnarray} 
 
Consider the quantity $Q_2 = \tilde{\varphi} + \frac{C}{\eta}e^{-\eta t}$. Then by construction, $$\frac{\partial Q_2}{\partial t} \leq 0.$$ Since $Q_2$ is bounded and monotonically decreasing, it tends to a limit as $t \to \infty$, call it $\tilde{\varphi}_{\infty}$. But $$\lim_{t\to \infty} \tilde{\varphi} = \lim_{t\to \infty} Q_2 - \lim_{t\to \infty} \frac{C}{\eta}e^{-\eta t} = \tilde{\varphi}_{\infty}.$$

To show that the convergence of $\tphi$ to $\tilde{\varphi}_{\infty}$ is actually $C^{\infty}$, suppose not. Then there exists a time sequence $t_m \to \infty$ such that for some $\varepsilon > 0$ and some integer $k$,  
\begin{equation}\label{eq:convcinf}
||\tphi(x,t_m) - \tphi_\infty||_{C^k} > \varepsilon, \ \ \forall m.
\end{equation} 
However, since $\tphi$ is bounded in $C^{\infty}$ there exists a subsequence $t_{m_j} \to \infty$ such that $\tphi(x,t_{m_j}) \to \tphi'_{\infty}$ as $j \to \infty$ for some smooth function $\tphi'_{\infty}$. By \eqref{eq:convcinf}, $\tphi'_{\infty} \neq \tphi_{\infty}$. This is a contradiction, since $\tphi \to \tphi_{\infty}$ pointwise. Hence the convergence of $\tphi$ to $\tphi_{\infty}$ is $C^{\infty}$.
 
We observe that $\tphi$ solves the parabolic flow $$\frac{\partial \tphi}{\partial t} = \log \frac{\det(g_{i\bar{j}} + \partial_i \partial_{\bar{j}} \tphi)}{\det g_{i\bar{j}}} - F - \int_M \frac{\partial \varphi}{\partial t} \ \omega^n.$$
Taking $t$ to infinity, we see that $\tphi_\infty$ solves the elliptic Monge-Amp\`{e}re equation $$\log \frac{\det (g_{i\bar{j}} + \partial_i \partial_{\bar{j}} \tphi_\infty)}{\det g_{i\bar{j}}} = F + b$$ where $$b = \int_M \left( \log \frac{\det (g_{i\bar{j}} + \partial_i \partial_{\bar{j}} \tphi_\infty)}{\det g_{i\bar{j}}} - F \right) \ \omega^n.$$ This combined with Theorem 5.1 completes the proof of the main theorem, and also provides a parabolic proof of the main theorem in \cite{TW2}.
\end{proof}

\bigskip
\noindent
{\bf Acknowledgements}

The author would like to thank his thesis advisor Ben Weinkove for countless helpful discussions and advice. The author would also like to thank Valentino Tosatti for helpful suggestions. The author thanks the referee for a careful reading of the first version of this paper and for making a number of helpful suggestions and comments.

The contents of this paper will appear in the author's forthcoming PhD thesis.

\bigskip
\noindent
Mathematics Department, University of California, San Diego, 9500 Gilman Drive \#0112, La Jolla CA 92093

\end{document}